		\pgfplotsset{compat=newest,legend style={font=\footnotesize},
                     ticklabel style={font=\footnotesize},
                     x label style={font=\footnotesize},
                     y label style={font=\footnotesize}}
\def\X{\mathcal{X}}
\def\Xdual{\mathcal{X}'}
\def\H{\mathcal{H}}
\def\measop{\boldsymbol{\nu}}
\def\reals{\mathbb{R}}
\def\nnreals{\reals_+}
\def\meas{\mathbf{y}}
\def\dualitymap{\mathcal{J}_\X}
\def\dualitymapdual{\mathcal{J}_{\Xdual}}
\def\Rieszmap{\mathcal{R}}
\def\Ell{L}
\def\f{f}
\def\matrix{\mathbf{H}}
\def\matrixA{\mathbf{A}}
\def\transform{\mathbf{L}}
\def\dcoeffs{\mathbf{f}}
\def\rcoeffs{\mathbf{a}}
\def\solutionmap{S}
\def\sign{\operatorname{sign}}
\def\dx{\mathrm d}
\def\LipConstant{K}
\def\el{e}
\newcommand{\LinOp}[2]{\mathcal{L}\left(#1,\,#2\right)}
\def\T{\mathrm{T}}
\def\Id{\mathbf{Id}}
\newtheorem{theorem}{Theorem}
\newtheorem{lemma}[theorem]{Lemma}
\newtheorem{definition}[theorem]{Definition}
\newtheorem{proposition}[theorem]{Proposition}
\newtheorem{example}[theorem]{Example}
\newtheorem{remark}[theorem]{Remark}
\title{Stability of Image-Reconstruction Algorithms}
\author{Pol~{del Aguila Pla},~\IEEEmembership{Member,~IEEE,}\thanks{
        Pol~{del Aguila Pla} is with the CIBM Center for Biomedical Imaging, in
        Switzerland.}
        Sebastian~Neumayer, and
        Michael~Unser,~\IEEEmembership{Fellow,~IEEE}
        \thanks{The authors are with the Biomedical Imaging Group
        at the École polytechnique fédérale de Lausanne, in Lausanne,
        Switzerland.}}
\begin{document}
\bstctlcite{IEEEexample:BSTcontrol}

\maketitle

\begin{abstract}
    % !TEX root = ../main.tex
Robustness and stability of image-reconstruction
algorithms have recently come under scrutiny. Their
importance to medical imaging cannot be overstated.
We review the known results for the topical variational
regularization strategies ($\ell_2$ and $\ell_1$
regularization) and present novel stability results for
$\ell_p$-regularized linear inverse problems for
$p\in(1,\infty)$. Our results guarantee Lipschitz continuity
for small $p$ and H\"{o}lder continuity for larger $p$. They
generalize well to the $\Ell_p(\Omega)$ function spaces.

\end{abstract}
\begin{IEEEkeywords}
    Lipschitz continuity, Inverse problems, Variational problems, Bridge regression.
\end{IEEEkeywords}

\section{Introduction} \label{sec:intro}
    % !TEX root = ../main.tex
\IEEEPARstart{I}{nverse} problems are at the core of computational imaging.
Medical imaging critically depends on the guarantees provided by established
image-reconstruction methodologies to inform diagnostic and treatment
decisions. New techniques based on artificial intelligence and deep neural
networks offer major average performance improvements in most
applications~\cite{JinMcCFrouUns2017,ZhuLiuCauRosRos2018,Str2018,AguilaPla2019}, at the cost
of poor practical stability~\cite{Antun2020,HN20} and a lack in theoretical
guarantees. In particular, seemingly small perturbations of the measurements
can produce large errors in the resulting image. Insidiously, these errors may
incorporate deceptive patterns that look realistic because they were learnt
from the training database (hallucination)~\cite{GotAntAdcHan2020}.
Additionally, questions have also been raised on the stability guarantees
provided by variational inverse-problem approaches using
$\ell_p$-regularization strategies to induce
structure~\cite{Darestani2021,Genzel2020}. In this paper, we first review the
unicity and stability properties of classical Tikhonov regularization ($p=2$)
and sparsity-promoting regularization ($p=1$). Then, we present novel
stability results for $\ell_p$-regularized inverse problems for
$p\in(1,\infty)$. In particular, we show that the solution map is locally
Lipschitz continuous for $p\in(1,2]$ and globally $1/(p-1)$-H\"{o}lder
continuous for $p\in (2,\infty)$. The proofs also cover the case of the
$\Ell_p(\Omega)$ function spaces. Our aim in presenting these
results is to pave the way toward a quantitative comparison of
image-reconstruction methods in terms of stability.

A broad category of image-reconstruction algorithms can be formulated as the
variational problem
\cite{Engl1996,Vogel2002,Scherzer09,Schuster2012,Benning2018,McCann2019}
\begin{equation} \label{eq:image_rec}
    \min_{\tilde\dcoeffs\in\reals^N} \Bigl\lbrace
            \bigl\Vert \meas - \tilde\matrixA\tilde\dcoeffs \bigr\Vert_2^2
            + \lambda \bigl\Vert \transform \tilde\dcoeffs \bigr\Vert_{p}^{p}
                        \Bigr\rbrace\,.
\end{equation}
For $p=2$, \eqref{eq:image_rec} corresponds to classical Tikhonov
regularization~\cite{Tik43,Tikhonov1977}; and for $p=1$, \eqref{eq:image_rec}
corresponds to sparsity-based regularization~\cite{Candes2006,Donoho2006}.
Here, $\tilde\matrixA\in\reals^{M\times N}$  with $M\leq N$ is the forward
operator. For a given imaging system, it relates the discrete image
representation $\tilde\dcoeffs\in\reals^N$ to the measurements
$\meas\in\reals^M$. Furthermore, $\transform$ is a linear transform
(\emph{e.g.}, the finite-difference operator) that gets penalized through the
$\ell_p$ norm, and $\lambda\in\nnreals$ is the regularization parameter
controlling the tradeoff between the data-fidelity term and the regularizer.
An alternative formulation to \eqref{eq:image_rec} is the synthesis formulation
\begin{equation} \label{eq:synthesis_form}
    \min_{{\dcoeffs}\in\reals^N} \bigl\lbrace
            \Vert \meas - {\matrixA} {\dcoeffs} \Vert_2^2
            + \lambda \left\Vert {\dcoeffs} \right\Vert_{p}^{p}
                        \bigr\rbrace\,,
\end{equation}
which, if $\transform$ is invertible, corresponds exactly to
\eqref{eq:image_rec} with ${\dcoeffs} = \transform \tilde\dcoeffs$ and
${\matrixA}=\tilde\matrixA\transform^{-1}$. Beyond image reconstruction
(\emph{e.g.}, for template-based reconstruction
methods~\cite{CO18,NPS18,LNOS19,NT2021}), this type of variational problem
appears in, for example, statistics under the name of bridge
regression~\cite{Fu1998,WanWenMal20} for $p\in(1,2)$,
and in machine learning as part of the multiple-kernel
learning~\cite{KloGil2012} literature.

The objective of this paper is to study the robustness of the reconstruction of
$\dcoeffs$ from $\meas$ based on \eqref{eq:synthesis_form}. Although concrete
definitions of robustness, stability, and similar concepts vary, the
predominant view in the literature is that robustness should be measured in
terms of the continuity properties of the reconstruction (or solution) map
$\solutionmap\colon\reals^M\rightarrow\reals^N$. This map is only well defined
if \eqref{eq:synthesis_form} has a unique solution $\dcoeffs_\meas$, in which
case $\solutionmap\colon \meas \mapsto \dcoeffs_\meas$. In this context, we
study the stability of the reconstruction in terms of bounds on
$\Vert \dcoeffs_{\meas_1} - \dcoeffs_{\meas_2} \Vert_{\ell_p}$ with respect to
$\Vert \meas_1 - \meas_2 \Vert_{2}$ for any two measurements $\meas_1$ and
$\meas_2$. Depending on the relation between these terms, the stability is
weaker or stronger. The most general category we contemplate for stability is
local H\"{o}lder continuity, where
\begin{equation} \label{eq:local_Holder}
   \Vert \dcoeffs_{\meas_1} - \dcoeffs_{\meas_2} \Vert_{\ell_p} \leq
    K \, \Vert \meas_1 - \meas_2 \Vert^\beta_{2}\,,
\end{equation}
with $K\geq0$ and $\beta\in[0,1]$ for any two measurements 
$\meas_1,\meas_2\in \reals^M$ within a set of measurements $Y\subset\reals^M$.
Here, local signifies that $K=K(Y)$ depends on the choice of set $Y$, which may be 
a cube or ball in $\reals^M$ containing expected reasonable measurements. 
The strongest stability result comprised within the same expression 
\eqref{eq:local_Holder} is global Lipschitz continuity, where $Y=\reals^M$ and 
$\beta=1$. Given
bounds such as \eqref{eq:local_Holder} for any two image-reconstruction algorithms,
one can objectively compare their stability properties in terms of the exponent
$\beta$ and the value of $K$.

\subsection{Related Work}

    Although the robustness of regularized variational problems has been
    studied extensively before, most studies relied on asymptotic criteria
    for vanishing noise~\cite{BurOsh04,DauDefDem04,ResSch06,GraHalSch08}.
    These are valid only when $\Vert\meas_1-\meas_2\Vert_2\rightarrow0$ and
    are weaker than the ones we target under the conditions stated
    in~\eqref{eq:local_Holder}. (See~\cite{Bonnans2000} for an extensive
    overview on stability criteria for variational problems.)

    The stability of solutions of variational inverse problems has also been
    investigated using criteria similar to ours. In~\cite{DurNik06}, the
    authors assume that the forward operator is injective and invertible and
    that directional derivatives do not vanish at non-smooth points of the
    objective functional. These conditions are rather restrictive and
    superfluous in our particular setting. In~\cite{Shv12}, the authors
    consider finite-dimensional constrained-optimization problems and use
    ideas similar to ours. However, our analysis builds on a condition that
    involves the modulus of convexity of the regularizer and that is less
    limiting than the strong convexity imposed in~\cite{Shv12}.

    A related but fundamentally different problem than the one we discuss
    is algorithmic stability in learning theory. There, the interest is to
    bound the magnitude of changes in the output of a learned algorithm with
    respect to changes in its training data. In that context, $\ell_1$ and
    $\ell_p$ regularization have also been studied in
    detail~\cite{XuCarMan2012}.
    % Discarded: WibRosPog09, not peer reviewed.
    % Of particular relevance is~\cite{WibRosPog09}, which contains an
    % algorithmic stability result obtained using strategies similar to
    % ours.

\section{Variational Regularization of Inverse
         Problems} \label{sec:representer-theorem} \label{sec:assumptions}
        % !TEX root = ../main.tex
We now discuss the variational regularization of linear inverse
problems from the perspective used
in~\cite{Unser2016,Gupta2018,Unser2019,Unser2020a,Unser2021}.
The theory is formulated for Banach spaces, which are complete vector spaces
with a norm. This level of generality is appropriate for our study because
the $\ell_p$ spaces that characterize~\eqref{eq:synthesis_form} are
Banach spaces. Throughout the main body of the paper, we
rely on the intuitive understanding of some of the mathematical
terms, without diverting the reader's attention with extensive technical
details. Appendix~\ref{app:premath} is designed to complement the paper
by providing the basic functional analytical background for our work.

An image $\f$ is considered as an object in a Banach space $\X$.
The measurements
$\meas\in\reals^{M}$ of $\f$ are modeled as some noisy version of
$\measop(\f)$, where $\measop\colon\X\to \reals^{M}$ is a linear operator
given by
\begin{equation}\label{eq:measop}
    \f \mapsto \measop(\f) \coloneqq
     \bigl(\langle \nu_1, \f  \rangle_{\Xdual\times\X}, \ldots, \langle
      \nu_M, \f \rangle_{\Xdual\times\X}\bigr),
\end{equation}
for the set $\lbrace\nu_m\rbrace_{m=1}^M\subset\Xdual$ of linearly independent
measurement functionals. Here, the $\nu_m$ are elements of the dual space
$\Xdual$, which is made of all the linear continuous functionals
$\nu\colon\X\to\reals$ (see Definition~\ref{def:dual}). The notation
$\langle\nu_m,\f\rangle_{\Xdual\times\X}$ is used to denote the evaluation of
$\nu_m$ at $\f$. The operator $\measop$ in~\eqref{eq:measop} generalizes
the role of the matrix $\matrixA$ in~\eqref{eq:synthesis_form}.
The choice of the pair $(\X,\Xdual)$ for a specific problem corresponds
to the choice of regularizer and, thus, to the choice of the desired
properties of $\f$ (see \eqref{eq:solutionset} below). Although the theory is
more general~\cite{Unser2021} we make here the restrictive assumption that
$\X$ is a reflexive and strictly convex Banach space (see
Definition~\ref{def:reflexive}). This is true for the spaces of interest in
this paper, which are $\X=\ell_p$ and $\Xdual=\ell_q$ with $p \in(1,\infty)$
and $q=p/(p-1)$. Then, the solutions to the variational problem
\begin{equation}\label{eq:solutionset}
    \min_{\f\in\X}\left\lbrace
                        E\left( \meas, \measop(\f) \right)
                        + \psi\left( \Vert \f \Vert_{\X}\right)
                     \right\rbrace
\end{equation}
are taken to be reconstructions of $\f$ from the measurements $\meas$.

%\added{For example, in the case of two-dimensional computed tomography (CT)
%problems, one can choose the Sobolev space $\X=H_0^{1}(\Omega)$ for some
%compact $\Omega \subset \reals^2$. For a CT problem with $M$ measurements at
%angles $\lbrace\theta_m\rbrace_{m=1}^M\subset[0,\pi)$ and offsets
%$\lbrace t_m\rbrace_{m=1}^M\subset \reals$, we then have that the measurement
%functionals $\nu_m\in\Xdual$ for $m\in\lbrace1,2,\dots,M\rbrace$ can be
%identified with
%$\nu_m = \delta(t_m-\langle \cdot, \boldsymbol{\theta}_m\rangle)$,
%where $\boldsymbol{\theta}_m = [\cos(\theta_m),\sin(\theta_m)]$ (see
%Proposition~\ref{prop:Radon}). Then, the regularizer in \eqref{eq:solutionset}
%can be chosen as the classical Tikhonov regularizer
%$\Vert\nabla\f\Vert^2_{\Ell_2(\Omega)}$.
%In this paper, however, we will focus on the choices $\X=\ell_p$ or
%$\X=\Ell_p(\Omega)$, which correspond to $\ell_p$-regularization.}

The function $E\colon\reals^M \times \reals^M \to \overline{\reals}_+$ is a
data-fidelity term. It penalizes reconstructions $\f$ that do not agree with
the measurements $\meas$; for example, this could be the least-squares term
used in \eqref{eq:synthesis_form}. The function $E$ is assumed to be lower
semi-continuous, proper, and strictly convex in its second argument. The
function $\psi\colon\nnreals\to \nnreals$ is assumed to be strictly increasing
and strictly convex. It regulates how much we penalize $\f$ according to its
norm $\Vert \f \Vert_{\X}$. In general, more than one reconstruction $\f$ may
achieve the same minimum cost
\begin{equation} \label{eq:cost_functional}
        J(\meas,\f) = E\left( \meas, \measop(\f) \right) +
                              \psi(\Vert \f \Vert_{\X})\,.
\end{equation}
Such reconstructions are considered as equally good for the variational
problem~\eqref{eq:solutionset}. 
As an example of the setup above, in the case of two-dimensional 
	computed tomography (CT) problems, one can usually model X-Ray
	 detectors using an impulse response $h\in\Ell_q(\reals)$ for some 
	 $q\in(1,\infty)$, so that the $M$
	 measurements at angles $\lbrace\theta_m\rbrace_{m=1}^M\subset[0,\pi)$ 
	 and offsets $\lbrace t_m\rbrace_{m=1}^M\subset \reals$ are given by 
	$\nu_m = h(t_m-\langle \cdot, \boldsymbol{\theta}_m\rangle)$,
	where $\boldsymbol{\theta}_m = [\cos(\theta_m),\sin(\theta_m)]$.
	Then, we have that $\nu_m\in\Ell_q(\Omega)=\Xdual$ for any bounded 
	$\Omega\subset\reals^2$, and thus $\X=\Ell_p(\Omega)$ with $p=1/(1-1/q)$.
	Then, the regularizer in \eqref{eq:solutionset} can be chosen as the 
	$\Ell_p$ regularizer.

The main object of study in this paper, the
optimization problem \eqref{eq:synthesis_form}, corresponds to the setup
above with
\begin{equation}\label{eq:lpreg}
    \X = (\reals^N,\Vert\cdot\Vert_p)\mbox{, }\psi(x)=\lambda x^p
    \mbox{, and } \lambda\in\nnreals\,.
\end{equation}
Theorem~\ref{th:rep} characterizes the solutions of \eqref{eq:solutionset} in
full generality using the duality map of the dual space $\Xdual$.
In this setting, the duality map is the nonlinear map
$\dualitymapdual\colon\Xdual\to\X$ that generalizes
the concept of parallel vectors to Banach spaces (see
Proposition~\ref{def:duality_bound} and Definition~\ref{def:duality_map}).

\begin{theorem}[Representer Theorem for Inverse Problems \cite{Unser2021}]
    \label{th:rep}
    For a reflexive and strictly convex pair $(\X,\Xdual)$ of Banach spaces
    with duality map $\dualitymapdual\colon\Xdual\to\X$,
    the variational problem \eqref{eq:solutionset} has the unique solution
    \begin{equation} \label{eq:representer_theorem}
        \f_\meas = \dualitymapdual\!\left( \nu_\meas \right)
                   \mbox{, with }
                   \nu_\meas = \sum_{m=1}^{M} \rcoeffs_{\meas}[m]\nu_m,
    \end{equation}
    for a unique coefficient vector $\rcoeffs_\meas \in \reals^M$.
\end{theorem}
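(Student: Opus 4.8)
The plan is to prove the statement in three stages: first establish existence and uniqueness of the minimizer by the direct method together with the convexity hypotheses; then reduce the problem to a minimum-norm interpolation; and finally characterize the interpolant through the duality map.

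\textbf{Existence and uniqueness.} Since $\psi$ is strictly increasing and strictly convex, it diverges at infinity, so the cost functional $J(\meas,\cdot)$ in \eqref{eq:cost_functional} is coercive. As $\X$ is reflexive, any bounded minimizing sequence has a weakly convergent subsequence. The operator $\measop$ is weak-to-strong continuous (its range is finite-dimensional and each $\langle\nu_m,\cdot\rangle_{\Xdual\times\X}$ is weakly continuous), $E$ is lower semicontinuous, and $\Vert\cdot\Vert_\X$ is weakly lower semicontinuous; hence $J(\meas,\cdot)$ is weakly lower semicontinuous and a minimizer $\f_\meas$ exists. For uniqueness I would suppose that $\f_1\neq\f_2$ are both minimizers and test the point $\f_t=(1-t)\f_1+t\f_2$ for $t\in(0,1)$. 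Linearity of $\measop$ and strict convexity of $E$ in its second argument force $\measop(\f_1)=\measop(\f_2)$, since otherwise $J$ would strictly decrease at $\f_t$. The data terms then coincide, so optimality forces $\psi(\Vert\f_t\Vert_\X)=(1-t)\psi(\Vert\f_1\Vert_\X)+t\,\psi(\Vert\f_2\Vert_\X)$; combining the triangle inequality, monotonicity, and strict convexity of $\psi$ yields $\Vert\f_1\Vert_\X=\Vert\f_2\Vert_\X=\Vert\f_t\Vert_\X$, which contradicts the strict convexity of $\X$ unless $\f_1=\f_2$.

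\textbf{Reduction to minimum-norm interpolation.} Writing $\mathbf{z}^\star=\measop(\f_\meas)$, the data-fidelity term depends on $\f$ only through $\measop(\f)$. Consequently $\f_\meas$ must minimize $\psi(\Vert\f\Vert_\X)$, equivalently $\tfrac12\Vert\f\Vert_\X^2$ (as $\psi$ is strictly increasing), over the affine set $\{\f\in\X:\measop(\f)=\mathbf{z}^\star\}$: any competitor with the same measurements and smaller norm would strictly lower $J$, contradicting optimality.

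\textbf{Characterization via the duality map.} The constraint set is $\f_\meas+\mathcal{N}$ with $\mathcal{N}=\{\f\in\X:\langle\nu_m,\f\rangle_{\Xdual\times\X}=0,\ m=1,\dots,M\}$. Because the pair $(\X,\Xdual)$ is reflexive and strictly convex, $\X$ is smooth, so $\tfrac12\Vert\cdot\Vert_\X^2$ is Gâteaux differentiable with derivative the single-valued duality map $\dualitymap\colon\X\to\Xdual$, which is a bijection with inverse $\dualitymapdual$ (Appendix~\ref{app:premath}). The first-order optimality condition for the minimum-norm problem is that $\dualitymap(\f_\meas)$ vanishes on the tangent space $\mathcal{N}$, i.e.\ $\dualitymap(\f_\meas)\in\mathcal{N}^\perp=\mathrm{span}\{\nu_1,\dots,\nu_M\}$, the last equality holding because the $\nu_m$ are linearly independent. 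Hence $\dualitymap(\f_\meas)=\sum_{m=1}^M\rcoeffs_\meas[m]\,\nu_m=\nu_\meas$, and applying $\dualitymapdual$ gives $\f_\meas=\dualitymapdual(\nu_\meas)$; uniqueness of $\rcoeffs_\meas$ follows from injectivity of $\dualitymap$ and linear independence of $\{\nu_m\}$. The main obstacle I anticipate lies in this last stage: with no Hilbert-space orthogonality available, the projection argument must be replaced by the correct Banach-space optimality condition identifying the Gâteaux gradient of the squared norm with the duality map and placing it in the annihilator of $\mathcal{N}$, which hinges on the smoothness and the bijectivity of the duality maps guaranteed by the reflexive and strictly convex assumptions.
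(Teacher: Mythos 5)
The paper itself contains no proof of Theorem~\ref{th:rep}; it is imported verbatim from \cite{Unser2021}, so there is no in-paper argument to compare against. Your proposal is a correct reconstruction of the standard proof, and its three-stage structure---direct method for existence and uniqueness, reduction to minimum-norm interpolation over the affine set $\lbrace \f\in\X : \measop(\f)=\measop(\f_\meas)\rbrace$, and characterization of the interpolant via the duality map and the annihilator of the measurement null space---is essentially the route taken in the cited reference. Three points would need to be made explicit in a full write-up. First, the direct method requires $J(\meas,\cdot)$ to be proper; this follows because linear independence of the $\nu_m$ makes $\measop$ surjective onto $\reals^M$, so the domain of $E(\meas,\cdot)$ is reachable. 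Second, you should separate which hypothesis buys what: strict convexity of $\X$ gives uniqueness (your triangle-inequality argument) and injectivity of $\dualitymap$, whereas single-valuedness of $\dualitymap$ (equivalently, smoothness of $\X$) follows from strict convexity of $\Xdual$ together with reflexivity---this is precisely why the theorem assumes the \emph{pair} $(\X,\Xdual)$ to be strictly convex, and your one-line claim that the pair assumptions imply smoothness of $\X$ is correct but rests on this duality, as does the bijectivity $\dualitymap^{-1}=\dualitymapdual$ under the canonical identification $\X=\Xdual'$. Third, the annihilator identity $\mathcal{N}^\perp = \mathrm{span}\lbrace\nu_1,\dots,\nu_M\rbrace$ holds for any finite family of functionals (linear independence is needed only for uniqueness of $\rcoeffs_\meas$), and the identification of the G\^{a}teaux gradient of $\tfrac12\Vert\cdot\Vert^2_\X$ with $\dualitymap$ is Asplund's theorem. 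With these standard facts cited, your argument is complete and matches the approach of \cite{Unser2021}.
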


\begin{figure}
    \begin{tikzpicture}

\definecolor{darkgray176}{RGB}{176,176,176}
\definecolor{darkturquoise0191191}{RGB}{0,191,191}
\definecolor{goldenrod1911910}{RGB}{191,191,0}
\definecolor{lightgray204}{RGB}{204,204,204}

\begin{axis}[
width=.5\textwidth,
height=.4\textwidth,
legend cell align={left},
legend columns=3,
legend style={
  fill opacity=0.8,
  draw opacity=1,
  text opacity=1,
  at={(0.5,0.15)},
  anchor=south,
  draw=lightgray204
},
tick align=outside,
tick pos=left,
x grid style={darkgray176},
xmin=-0.25, xmax=9.25,
xtick style={color=black},
y grid style={darkgray176},
y label style={at={(0,1)}, rotate=-90},
ymin=-7.88193357647592, ymax=7.88193357647592,
ytick style={color=black}
]
\addplot [very thin, black, dashed, forget plot]
table {%
0 0
10 0
};
\addplot [semithick, blue, mark=*, mark size=3, mark options={solid}, only marks]
table {%
0 0.441227486885041
1 -0.330870151894088
2 2.43077118700778
3 -0.252092129603077
4 0.109609841578183
5 1.58248111706156
6 -0.909232404856242
7 -0.591636657930288
8 0.187603225837035
9 -0.329869957779359
};
\addlegendentry{$\nu_0$}
\addplot [semithick, black, dash pattern=on 1pt off 3pt on 3pt off 3pt]
table {%
0 7.16539416043266
1 7.16539416043266
2 7.16539416043266
3 7.16539416043266
4 7.16539416043266
5 7.16539416043266
6 7.16539416043266
7 7.16539416043266
8 7.16539416043266
9 7.16539416043266
};
\addlegendentry{$\pm|| \nu_0 ||_{\ell_1}$}
\addplot [semithick, black, dash pattern=on 1pt off 3pt on 3pt off 3pt, forget plot]
table {%
0 -7.16539416043266
1 -7.16539416043266
2 -7.16539416043266
3 -7.16539416043266
4 -7.16539416043266
5 -7.16539416043266
6 -7.16539416043266
7 -7.16539416043266
8 -7.16539416043266
9 -7.16539416043266
};
\addplot [semithick, red, opacity=0.8, mark=diamond*, mark size=3, mark
options={solid}, only marks]
table {%
0 0.441227486885041
1 -0.330870151894088
2 2.43077118700778
3 -0.252092129603077
4 0.109609841578183
5 1.58248111706156
6 -0.909232404856242
7 -0.591636657930288
8 0.187603225837035
9 -0.329869957779359
};
\addlegendentry{$p = 2$}
\addplot [semithick, darkturquoise0191191, opacity=0.8, mark=triangle*, mark size=3, mark options={solid}, only marks]
table {%
0 1.89492513698935e-74
1 -5.98542755967762e-87
2 2.43077118700778
3 -9.27395117257822e-99
4 6.25591879624563e-135
5 5.56147191074121e-19
6 -4.77338701038791e-43
7 -1.03954575892449e-61
8 1.36573449096039e-111
9 -4.42193735179664e-87
};
\addlegendentry{$p = 1.01$}
\addplot [semithick, goldenrod1911910, opacity=0.8, mark=triangle*, mark size=3, mark options={solid,rotate=180}, only marks]
table {%
0 7.16539416043266
1 -7.16539416043266
2 7.16539416043266
3 -7.16539416043266
4 7.16539416043266
5 7.16539416043266
6 -7.16539416043266
7 -7.16539416043266
8 7.16539416043266
9 -7.16539416043266
};
\addlegendentry{$p = \infty$}
\end{axis}

\end{tikzpicture}
    \caption{Application of different duality maps $\dualitymapdual$
            to a vector $\nu_0\in\Xdual$ when
            $\X=(\reals^{10},\Vert\cdot\Vert_p) \subset \ell_p$ for
            different values of $p$. \label{fig:simple_duality_map}}
\end{figure}

\begin{figure*}
    \centering
    \begin{tikzpicture}
    \begin{axis}[
        width=.34\textwidth,
        height=0.34\textwidth,
        xlabel={$\dcoeffs_1$},
        ylabel={$\dcoeffs_2$},
        zlabel={$\dcoeffs_3$},
        title={$p=1.25$},
        x label style={font=\scriptsize,yshift=0.4cm},
        y label style={font=\scriptsize,yshift=0.35cm,xshift=-0.1},
        z label style={font=\scriptsize,yshift=-0.4cm,xshift=0.1cm},
        ]
        \addplot3+ [opacity=0.35,
        only marks, scatter,
        scatter src=explicit,
        ] table[meta=c] {figs/duality_map_p_1.25.txt};
    \end{axis}
\end{tikzpicture}
\begin{tikzpicture}
    \begin{axis}[
        width=.34\textwidth,
        height=0.34\textwidth,
        xlabel={$\dcoeffs_1$},
        ylabel={$\dcoeffs_2$},
        zlabel={$\dcoeffs_3$},
        title={$p=1.5$},
        x label style={font=\scriptsize,yshift=0.4cm},
        y label style={font=\scriptsize,yshift=0.35cm,xshift=-0.1},
        z label style={font=\scriptsize,yshift=-0.4cm,xshift=0.1cm},
        ]
        \addplot3+ [opacity=0.35,
        only marks, scatter,
        scatter src=explicit,
        ] table[meta=c] {figs/duality_map_p_1.5.txt};
    \end{axis}
\end{tikzpicture}
\begin{tikzpicture}
    \begin{axis}[
        width=.34\textwidth,
        height=0.34\textwidth,
        xlabel={$\dcoeffs_1$},
        ylabel={$\dcoeffs_2$},
        zlabel={$\dcoeffs_3$},
        title={$p=1.75$},
        x label style={font=\scriptsize,yshift=0.4cm},
        y label style={font=\scriptsize,yshift=0.35cm,xshift=-0.1},
        z label style={font=\scriptsize,yshift=-0.4cm,xshift=0.1cm},
        ]
        \addplot3+ [opacity=0.35,
        only marks, scatter,
        scatter src=explicit,
        ] table[meta=c] {figs/duality_map_p_1.75.txt};
    \end{axis}
\end{tikzpicture}
    \caption{Dependence of $\dcoeffs=\dualitymapdual(\nu)$ on the vector of coefficients
    $\rcoeffs$, with $\nu=\rcoeffs[1] \nu_1 + \rcoeffs[2] \nu_2$. The norm of the vector of coefficients
    is encoded in the intensities. Here, $\X=(\reals^3,\Vert\cdot\Vert_p)$ for
    $p\in\lbrace1.25,1.5,1.75\rbrace$, the measurement operator
    $\measop:\reals^3 \to \reals^2$ is composed of two $\ell_2$-normalized
    random measurement vectors $\nu_1$ and $\nu_2$, and a regular ($32\times 32$)
    grid of $\rcoeffs$ in $[-1,1]^2$ is explored.
\label{fig:manifold_vs_a}}
\end{figure*}

Theorem~\ref{th:rep} reveals several favorable properties of the variational
approach to inverse problems. First, it guarantees that \eqref{eq:solutionset}
always has a well-defined, unique solution.
Further, \eqref{eq:representer_theorem} transforms the search for
$\f_\meas\in\X$ (in the case of \eqref{eq:synthesis_form}, of dimension
$N\geq M$ and, in general, possibly infinite-dimensional) into a
finite-dimensional search for $\rcoeffs_\meas\in\reals^M$, a vector of the
same dimension as the measurements. Prior information is injected in the
solution by means of the duality map $\dualitymapdual$ given by the chosen
regularization---through the choice $(\X,\Xdual)$---which maps
$\nu_{\meas}\mapsto \f_\meas$.
The regularizing effect of the duality map can be seen in
Figure~\ref{fig:simple_duality_map}. The general result of
Theorem~\ref{th:rep} fits within a family of representer theorems for
variational problems
\cite{Zhang2012,Unser2016,Flinth2018,Boyer2019,Unser2019,Bredies2020,Unser2020a
,Unser2021,BarDeVRosVig2021}.

For the variational problem \eqref{eq:synthesis_form}, Theorem~\ref{th:rep}
guarantees that the reconstruction map $\solutionmap\colon\reals^M\to\X$
is well defined and that it is the composition of an unknown map
$\meas\mapsto \nu_{\meas}$ with the duality map $\dualitymapdual$. Thereby,
the study of $\dualitymapdual$ offers a first, more intuitive, approximation
to the problem.
Fortunately, its expression is known in closed-form for $\X=\ell_p$ (see
Proposition~\ref{prop:duality_map_ellq}).
Figure~\ref{fig:manifold_vs_a} illustrates
the duality map with small computational examples for
$p\in\lbrace1.25,1.5,1.75\rbrace$, where we control the
coefficient vector $\rcoeffs_\meas\in\reals^2$ and depict the resulting
solutions $\dcoeffs_\meas\in\reals^3$. These suggest that stability highly
depends on $p$, with rougher landscapes as $p$ approaches $1$ and smoother
ones as it approaches $2$.

As a prelude to Section~\ref{sec:unicity-and-stability}, where we
characterize the robustness of image reconstruction using $\ell_p$
regularization for $p\in(1,\infty)$, we now discuss the two topical examples of
$\ell_2$ regularization and sparsity-promoting $\ell_1$ regularization.
Although known for the most part, the results will
set the context for the later, more general results for $p\in(1,\infty)$.

    \subsection{\texorpdfstring{$\ell_2$}{l2} Regularization and Hilbert
                Spaces} \label{sec:l2}
        % !TEX root = ../main.tex
The classical example of variational regularization is
$\ell_2$ regularization, as in \eqref{eq:lpreg} with $p=2$.
As it turns out, the analysis of the stability of the solution is not
specific to the finite-dimensional case where $\dcoeffs\in\reals^N$. It
can be transposed into the more general space of finite-energy discrete signals
$\f\in\ell_2$. In fact, because $\ell_2$ is a Hilbert space (a Banach space
with an inner product, see Definition~\ref{def:hilbert}), $\ell_2$
regularization can be analyzed in the
broader setting of Tikhonov regularization in Hilbert spaces. The following
analysis is valid for every Hilbert space $\H$, including $\Ell_2(\Omega)$ and
other
Hilbert/Sobolev function spaces. In the context of Theorem~\ref{th:rep}, then,
this
corresponds to choosing $\X=\H$. Because Hilbert spaces are strictly
convex and reflexive, Theorem~\ref{th:rep} applies. The duality map in Hilbert
spaces with
the choices above corresponds to the Riesz map
$\Rieszmap\colon\H'\rightarrow\H$ (see Definition~\ref{def:riesz_map} and the
subsequent discussion). The Riesz map is linear. Thus, it holds that
\begin{equation} \label{eq:l2solution}
    \f_\meas = \sum_{m=1}^{M} \rcoeffs_{\meas}[m] \varphi_m, \mbox{ where }
    \varphi_{m} = \Rieszmap\lbrace\nu_{m}\rbrace
\end{equation}
for a unique vector of coefficients $\rcoeffs_{\meas}\in\reals^M$.
Using that $\Vert\f\Vert^2_\H = \langle \f,\f \rangle_\H$ for any $\f\in\H$
(see Definition~\ref{def:hilbert}), and that
$\langle \nu_m, \varphi_n \rangle_{\H'\times\H} = \langle \varphi_m, \varphi_n\rangle_\H$
(see Definition~\ref{def:riesz_map}), where $\langle\cdot,\cdot\rangle_{\H}$ is the inner
product in $\H$, we obtain that
\begin{equation} \label{eq:finite-rewrite-l2}
    \Vert \f_\meas \Vert^2_{\H} = \rcoeffs_{\meas}^{\T}\matrix\rcoeffs_{\meas}
    \mbox{, and }\measop(\f_{\meas}) = \matrix \rcoeffs_{\meas}\,,
\end{equation}
with
$$
    \matrix = \left( \langle \varphi_m, \varphi_n\rangle_{\H}
              \right)_{n,m\in\lbrace1,\dots,M\rbrace}\in
    \reals^{M \times M}.
$$

Using \eqref{eq:l2solution} and \eqref{eq:finite-rewrite-l2}, we then
write~\eqref{eq:solutionset} as the finite-dimensional optimization
problem
\begin{equation}\label{eq:l2case}
    \min_{\rcoeffs \in \reals^M}\bigl\lbrace
        E\left( \meas,  \matrix\rcoeffs \right) +
        \lambda \,\rcoeffs^{\mathrm{T}} \matrix \rcoeffs
    \bigr\rbrace.
\end{equation}
For the specific case of least squares, where $E$ is chosen as
\begin{equation} \label{eq:LSdatafidelity}
    E(\meas_1,\meas_2) = \frac12 \Vert \meas_1 - \meas_2\Vert_2^2,
\end{equation}
this results in a fully quadratic problem on the coefficients $\rcoeffs$,
with closed-form solution
\begin{equation}\label{eq:closed_form_ls-tikhonov}
    \rcoeffs_\meas = \bigl(\matrix^{\T}\matrix +
                   2\lambda\matrix^{\T}\bigr)^{-1} \matrix^{\T} \meas
               = (\matrix + 2 \lambda \Id)^{-1} \meas,
\end{equation}
where we took advantage of the property that $\matrix$ is Hermitian by
construction and full-rank due to the linear independence of the measurement
functionals $\nu_m$. This allows us to characterize stability as in
Proposition~\ref{prop:Lipschitzl2}.

\begin{figure}
    \input{figs/l2bound_expresults}

    \vspace{-20pt}

    \caption{Lipschitz constant in \eqref{eq:l2tightest}
        compared to experimental values of
        $\Vert\dcoeffs_{\meas_1} - \f_{\meas_2}\Vert_2 /
        \Vert\meas_1-\meas_2\Vert_2$ for $\ell_2$-regularized
        least-squares problems with $\lambda=1$ and
        $\tilde{\measop}=\sigma\measop$, where
        $\measop\colon\reals^3\to\reals^2$ has random-normal entries.
\label{fig:l2_experimental_results}}
\end{figure}

\begin{proposition}[Lipschitz Continuity of Tikhonov-Regularized Least Squares]
\label{prop:Lipschitzl2}
    Consider the $\ell_2$-regularized least-squares optimization
    problem
    \begin{equation} \label{eq:l2ls}
        \min_{\f\in\ell_2} \Bigl\lbrace
            \frac12 \left\Vert \meas - \measop(\f)\right\Vert_2^2
            + \lambda \left\Vert \f \right\Vert^2_{\H}\Bigr\rbrace.
    \end{equation}
    Moreover, consider two measurements $\meas_1,\meas_2\in\reals^M$ and their
    associated solutions of \eqref{eq:l2ls},
    $\f_{\meas_1},f_{\meas_2}\in\ell_2$.
    Then, one has that
    \begin{equation} \label{eq:l2tightest}
        \Vert \f_{\meas_1} - \f_{\meas_2} \Vert_{\H} \leq
            \max_{m\in\lbrace 1,2,\dots, M\rbrace}
                        \frac{\sqrt{\sigma_m}}{\sigma_m + 2\lambda}
            \Vert \meas_1 - \meas_2 \Vert_2
    \end{equation}
    and the reconstruction map $\solutionmap\colon\meas\mapsto\f_\meas$ is
    Lipschitz continuous. Here, $\lbrace\sigma_m\rbrace_{m=1}^{M}$ is the set
    of eigenvalues of the matrix $\matrix$.
\end{proposition}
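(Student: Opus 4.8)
The plan is to reduce the entire statement to a finite-dimensional spectral estimate built on the closed-form coefficient expression \eqref{eq:closed_form_ls-tikhonov}. Because $p=2$, the Riesz map is linear, so the reconstruction $\f_\meas = \sum_{m=1}^M \rcoeffs_\meas[m]\varphi_m$ depends linearly on the coefficient vector. First I would set $\Delta\rcoeffs = \rcoeffs_{\meas_1} - \rcoeffs_{\meas_2}$ and note that $\f_{\meas_1} - \f_{\meas_2} = \sum_{m=1}^M \Delta\rcoeffs[m]\,\varphi_m$. The quadratic-form identity in \eqref{eq:finite-rewrite-l2} then applies verbatim to this difference, yielding $\Vert \f_{\meas_1}-\f_{\meas_2}\Vert_\H^2 = \Delta\rcoeffs^\T \matrix \Delta\rcoeffs$.

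Next, I would insert the closed form. Subtracting the two instances of \eqref{eq:closed_form_ls-tikhonov} gives $\Delta\rcoeffs = (\matrix + 2\lambda\Id)^{-1}(\meas_1 - \meas_2)$, so that
\[
    \Vert \f_{\meas_1} - \f_{\meas_2}\Vert_\H^2
    = (\meas_1-\meas_2)^\T (\matrix + 2\lambda\Id)^{-1}\matrix(\matrix + 2\lambda\Id)^{-1} (\meas_1-\meas_2).
\]
Since $\matrix$ is Hermitian and full-rank (a Gram matrix of linearly independent functionals, hence positive definite with eigenvalues $\sigma_m > 0$), it diagonalizes as $\matrix = U\,\mathrm{diag}(\sigma_1,\dots,\sigma_M)\,U^\T$ with $U$ orthogonal. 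All three factors of the central matrix are rational functions of $\matrix$ and therefore share the eigenbasis $U$, so that matrix is symmetric positive semidefinite with eigenvalues $\sigma_m/(\sigma_m + 2\lambda)^2$.

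Bounding the quadratic form by its largest eigenvalue and taking a square root then delivers exactly \eqref{eq:l2tightest}; as the constant $\max_m \sqrt{\sigma_m}/(\sigma_m + 2\lambda)$ is finite and independent of the measurements, global Lipschitz continuity of $\solutionmap$ follows at once. I do not foresee a genuine obstacle, since the argument is a direct computation. The one step deserving care is the first: the clean identity $\Vert \f_{\meas_1}-\f_{\meas_2}\Vert_\H^2 = \Delta\rcoeffs^\T\matrix\Delta\rcoeffs$ hinges entirely on the linearity of the Riesz map, which is special to $p=2$. This is precisely the property that fails for $p\neq 2$, explaining why the later, more general sections cannot rely on such an explicit spectral bound and must instead invoke the modulus of convexity of the regularizer.
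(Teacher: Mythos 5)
Your proposal is correct and takes essentially the same route as the paper's own proof: both reduce the problem to the Gram quadratic form $\Vert \f_{\meas_1}-\f_{\meas_2}\Vert_\H^2 = (\rcoeffs_{\meas_1}-\rcoeffs_{\meas_2})^\T\matrix(\rcoeffs_{\meas_1}-\rcoeffs_{\meas_2})$, substitute the difference of the closed-form solutions \eqref{eq:closed_form_ls-tikhonov}, diagonalize $\matrix$, and bound the resulting quadratic form by its largest eigenvalue $\max_m \sigma_m/(\sigma_m+2\lambda)^2$. The only difference is cosmetic: you spell out the linearity argument behind applying \eqref{eq:finite-rewrite-l2} to the difference and the positive definiteness of the Gram matrix, which the paper leaves implicit.
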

\begin{proof}
    Consider \eqref{eq:closed_form_ls-tikhonov} and
    \eqref{eq:finite-rewrite-l2}. Then,
    \begin{align*}
    \Vert \f_{\meas_1} - \f_{\meas_2} \Vert_{\H}^2 &=
        (\dcoeffs_{\meas_1}-\dcoeffs_{\meas_2})^{T}\matrix\,
        (\dcoeffs_{\meas_1}-\dcoeffs_{\meas_2}) \\
        &= (\meas_1-\meas_2)^{\T} \mathbf{P}
            \frac{\boldsymbol{\Lambda}}{(\boldsymbol{\Lambda} + 2\lambda\Id)^2}
            \mathbf{P}^{T} (\meas_1 - \meas_2) \\
        & \leq
        \max_{m\in\lbrace 1,\dots, M\rbrace}
                        \frac{\sigma_m}{(\sigma_m + 2\lambda)^2}
            \Vert \meas_1 - \meas_2 \Vert_2^2\,, \tag{15'}
    \end{align*}
    where $\mathbf{P}$ is the orthogonal matrix of eigenvectors of $\matrix$
    such that $\matrix = \mathbf{P}\boldsymbol{\Lambda} \mathbf{P}^{\T}$
    and $\boldsymbol{\Lambda}$ is the diagonal matrix containing the
    eigenvalues of $\matrix$.
\end{proof}

Of particular interest is how the bound~\eqref{eq:l2tightest} scales with
respect to the measurement functionals $\nu_m$ and their Gram matrix $\matrix$
for a given regularization parameter $\lambda$.
For instance, if we consider a measurement operator
$\tilde{\measop} = \sigma \measop$, we see that the Lipschitz constant
in~\eqref{eq:l2tightest} decays as
$1/\min_{m}\lbrace\sigma\sqrt{\sigma_{m}}\rbrace$ for $\sigma\rightarrow\infty$.
In other words, stability is ultimately regulated by those changes to which
$\tilde{\measop}$ is least sensitive. The behavior of the bound with respect
to $\sigma$ as well as empirical results are portrayed in
Figure~\ref{fig:l2_experimental_results}, together with the expected asymptotic behavior.

As expected, in \eqref{eq:l2tightest} we see that increasing the regularization parameter $\lambda$ will result in
more stable solution maps. However, doing so will also increase the bias of the 
resulting algorithm---c.f. \eqref{eq:closed_form_ls-tikhonov}---negatively affecting performance. Our results throughout the 
paper aim to compare the stability of algorithms once all parameters have been selected
to obtain the best achievable performance.

To summarize, $\ell_2$ regularization (and, in general, Tikhonov
regularization in any Hilbert space) leads to a unique solution, with a
solution map that is globally Lipschitz continuous.
Although Proposition~\ref{prop:Lipschitzl2} only covers
least-squares problems, we shall see
in Section~\ref{sec:unicity-and-stability} that this remains true for any
other strictly convex data-fidelity term.

    \subsection{\texorpdfstring{$\ell_1$}{l1} Regularization and
                Sparsity} \label{sec:l1}
        % !TEX root = ../main.tex
Variational image reconstruction driven by sparsity-promoting regularization
using the $\ell_1$ norm is supported by the theory known as compressed
sensing~\cite{Candes2006,Donoho2006}. The optimization problem
\begin{equation}\label{eq:constrained_l1}
    \min_{\dcoeffs\in\reals^N}\Vert \dcoeffs \Vert_1 \mbox{
subject to }
    \Vert \matrixA \dcoeffs - \meas \Vert_2 \leq \sigma
\end{equation}
has for solution the sparsest vector within the constraint set
$\mathcal{C}_\sigma = \lbrace \dcoeffs\in\reals^N : \Vert \matrixA \dcoeffs -
\meas \Vert_2\leq \sigma\rbrace$, provided $\matrixA$ fulfills some rather
strict conditions, namely, the restricted-isometry property. The minimization
in \eqref{eq:constrained_l1} is portrayed in Figure~\ref{fig:l1unique}, where
the radii of the $\ell_2$- and $\ell_1$-norm balls are increased until they
meet the boundary of $\mathcal{C}_\sigma$. In that example, the $\ell_1$-norm
minimization does indeed lead to a sparse solution. Although this analysis
relies on the constrained formulation in \eqref{eq:constrained_l1}, the effect
of the $\ell_1$-norm on the set of solutions of the corresponding regularized
formulation is effectively the same~\cite[Remark~3.3]{McCann2019}. This
regularized formulation corresponds to \eqref{eq:lpreg} when $p=1$.

\begin{figure}
    \centering
    \begin{tikzpicture}[scale=1.5]
    % Draw axis
    \draw[-latex] (-2,0) -- (3,0) node[anchor=north] {$\dcoeffs_1$};
    \draw[-latex] (0,-1.5) -- (0,1.9) node [anchor=east] {$\dcoeffs_2$};
    % Draw unit balls
    % l2 ball
    \draw[thick,blue,dashed] (0,0) circle (1);
    % l1 ball
    \def\s{1.15}
    \draw[thick,red,dashdotted] (-\s,0) -- (0,\s) -- (\s,0) -- (0,-\s) -- cycle;
    % Draw measurement line
    \draw[->,thick] (30:-2) -- (30:3) node [anchor=south] {$\matrixA
\dcoeffs$};
    % Draw measurement and constant L_2 norm lines
    \draw (30:1.5)--+ (120:-0.1) --+ (120:0.1) node
        [anchor=south] {$\meas$};
    \draw[dotted] (30:1) --+ (120:1.5) --+ (120:-2);
    \draw[dotted] (30:2) --+ (120:1.5) --+ (120:-2);
    \node at (60:1.75) {$\mathcal{C}_\sigma$};
    % Draw l2 and l1 solutions
    \node[blue,opacity=0.5] at (30:1) {\Large $\bullet$};
    \node[red,opacity=0.5] at (\s,0) {\small $\bigstar$};
\end{tikzpicture}
    \caption{Pictorial representation of the solution to
        $\min_{\dcoeffs\in\reals^N} \Vert \dcoeffs \Vert_{p}$ subject to
        $\|\matrixA \dcoeffs-\meas\|_2\leq\sigma$ for $p\in\lbrace1,2\rbrace$.
        Here, $\matrixA\in\reals^{1\times2}$ and
        $\dcoeffs=(\dcoeffs_1,\dcoeffs_2)\in\reals^2$. The solution is unique
        both for $p=1$ and $p=2$, while the $\ell_1$ penalty leads to the
        sparse solution, indicated by a star. \label{fig:l1unique}}
\end{figure}

The same restricted-isometry property that guarantees a unique,
sparse solution to \eqref{eq:constrained_l1} also provides the Lipschitz
stability~\cite{Candes2006} of that solution with respect to
variations in the measurements. In particular, when $\matrixA$ is
composed of linearly independent measurement functionals, as assumed in
Theorem~\ref{th:rep}, it can be shown that there exists a constant
$K_\mathrm{s}\in\nnreals$ such that
\begin{equation} \label{eq:CSLipschitz}
    \Vert \dcoeffs_{\meas_1} - \dcoeffs_{\meas_2} \Vert_2 \leq K_{\mathrm{s}}
    \Vert \meas_1 - \meas_2 \Vert_{2}.
\end{equation}
The caveat, however, is that it is challenging (or impossible) to obtain or
bound  the constant $K_\mathrm{s}$ for specific practical problems.
Because compressed sensing is formulated for finite-dimensional spaces, norm
equivalence, together with \eqref{eq:CSLipschitz}, imply Lipschitz stability,
as initially described in \eqref{eq:local_Holder} ($\beta=1$ and $Y=\reals^M$). In particular, the fact that
$\Vert \dcoeffs \Vert_{1}\leq K_{1,2} \Vert \dcoeffs \Vert_{2}$ for any
$\dcoeffs\in\reals^N$ and some fixed $K_{1,2}\in\nnreals$ implies that
$K=K_{1,2}K_\mathrm{s}$.

These attractive theoretical results, however, cannot be applied
to many sparsity-based image-reconstruction algorithms, particularly to those
that fail to operate within the regime of the restricted-isometry
property. When this happens, one may be faced with an infinite
number of solutions of \eqref{eq:constrained_l1}, as portrayed in
Figure~\ref{fig:l1nonunique}.
Then, the corresponding regularized problem has to be characterized by a
version of the representer theorem that is more general than the
one we presented in Theorem~\ref{th:rep}~\cite[Section~4]{Unser2021}. Indeed,
the extended version contemplates Banach spaces that are not strictly convex
and the solution set is shown to be the convex hull of a number of sparse
extreme points. Note that, under slightly more general conditions than the
restricted isometry property, weaker forms of stability than the ones we are
interested in can still be obtained for $\ell_1$
regularization~\cite{GraHalSch11,FleHofVes16}.

\begin{figure}
    \centering
    \begin{tikzpicture}[scale=1.5]
    % Draw axis
    \draw[-latex] (-2,0) -- (3,0) node[anchor=north] {$\dcoeffs_1$};
    \draw[-latex] (0,-1.5) -- (0,1.9) node [anchor=east] {$\dcoeffs_2$};
    % Draw unit balls
    % l2 ball
    \draw[thick,blue,dashed] (0,0) circle (1);
    % l1 ball
    \def\s{1.41}
    \draw[thick,red,dashdotted] (-\s,0) -- (0,\s) -- (\s,0) -- (0,-\s) -- cycle;
    \draw[very thick, red,opacity=0.5] (\s,0) -- (0,\s);
    % Draw measurement line
    \draw[->,thick] (45:-2) -- (45:3) node [anchor=south]
        {$\hat{\matrixA} \dcoeffs$};
    % Draw measurement and constant L_2 norm lines
    \draw (45:1.5)--+ (135:-0.1) --+ (135:0.1) node [anchor=south] {\small $\meas$};
    \draw[dotted] (45:1) --+ (135:1.5) --+ (135:-2);
    \draw[dotted] (45:2) --+ (135:1.5) --+ (135:-2);
    \node at (70:1.7) {$\mathcal{C}_\sigma$};
    % Draw l2 and l1 solutions
    \node[blue,opacity=0.5] at (45:1) {\Large $\bullet$};
    \node[red,opacity=0.5] at (\s,0) {\small $\bigstar$};
    \node[red,opacity=0.5] at (0,\s) {\small $\bigstar$};
\end{tikzpicture}
    \caption{Same representation as in Figure~\ref{fig:l1unique}, but for
        another linear operator. Here, the $\ell_1$
        penalty leads to a problem with infinitely many solutions. In
        particular, the solution set contains an entire edge of the
        $\ell_1$-norm unit ball: the convex hull of
        $\lbrace (0,1),(1,0)\rbrace$. This generalizes well
        to the infinite-dimensional sparsity-promoting regularization
        setting, see~\cite{Unser2016}.
    \label{fig:l1nonunique}}
\end{figure}

\section{Stability of Solutions} \label{sec:unicity-and-stability}
    % !TEX root = ../main.tex
In this section, we present our results for the general case of $\ell_p$
regularization with $p\in(1,\infty)$. Because, as in the case of $\ell_2$
regularization in Section~\ref{sec:l2}, the analysis of the stability of the
solution is not particular to the finite-dimensional setting, we directly
expose it for a (potentially infinitely-supported) discrete signal
$\f\in\ell_p$. We study the problem
\begin{equation} \label{eq:ellpproblem}
    \min_{\f\in\ell_p} \bigl\lbrace E\left(\meas,\measop(\f)\right)
                                    + \lambda \Vert \f \Vert^p_{\ell_p}
                       \bigr\rbrace
\end{equation}
with $\lambda\in\nnreals$, under the following assumptions:
\begin{enumerate}[({A}1)]
    \item the data-fidelity term
        $E\colon\reals^M\times\reals^M\to\reals_+$ is
        lower-semicontinuous
        and strictly convex in its second argument; \label{ass:niceE}
    \item the data-fidelity term $E$ is differentiable in its second argument, and
        the gradient
        $\nabla_f\left\lbrace E(\meas,\measop(\f)) \right\rbrace$ is
        Lipschitz continuous with respect to changes in the measurements
        $\meas$ with constant $\LipConstant_p$ for any $\f$;
        \label{ass:LipschitzdiffE}
    \item the measurement operator $\measop$ is composed of $M$ linearly
        independent functionals
$\lbrace\nu_m\rbrace_{m=1}^{M}$ as in
        \eqref{eq:measop}.
        \label{ass:indepmeasop}
\end{enumerate}
We now discuss our main results (Theorems~\ref{th:lp2toinf}~and~\ref{th:lp1to2}).
The proofs are deferred to their more general cases
(Theorems~\ref{th:Lp2toinf}~and~\ref{th:Lp1to2}), which cover the case of
$\Ell_p(\Omega)$ function spaces.

\begin{table}
    \caption{Continuity bounds obtained on the
        stability of solutions to $\ell_p$-regularized inverse problems.
        \label{tab:bounds}}
    \begin{tabular}{r|m{.35\textwidth}}
    \hline\hline
        $p\in(1,2)$ &
        $$\Vert \f_{\meas_1} - \f_{\meas_2} \Vert_{\ell_p} \leq
            \frac{(2\,r_p(Y))^{2-p}\LipConstant_p}{\lambda p(p-1)}
            \Vert \meas_1 - \meas_2 \Vert_2$$ \\ \hline
        $p\in[2,\infty)$ &
        $$\Vert \f_{\meas_1} - \f_{\meas_2} \Vert_{\ell_p} \leq
            \left(
                \frac{2^{p-2}\LipConstant_p}{\lambda p}
            \right)^{\frac{1}{p-1}}
            \Vert \meas_1 - \meas_2 \Vert_2^{\frac{1}{p-1}}$$ \\
    \hline\hline
    \end{tabular}
\end{table}

\subsection{Hölder Continuity for \texorpdfstring{$p\in[2,\infty)$}{p grater or
    equal than 2}}

    The case of $p=2$ has been discussed for least-squares problems
    in Section~\ref{sec:l2}. There, we proved Lipschitz continuity of the
    solution using a Hilbert-space analysis (see
    Proposition~\ref{prop:Lipschitzl2}). Our result here extends this
    claim for $p=2$ to any $E$ that fulfills (A\ref{ass:niceE}) and
    (A\ref{ass:LipschitzdiffE}).

    \begin{theorem}[Hölder Continuity of $\ell_p$-Regularized Linear Inverse
        Problems, $p\in[2,\infty)$] \label{th:lp2toinf}
        Consider the variational problem \eqref{eq:ellpproblem} with
        $p\in[2,\infty)$ and assume that
        (A\ref{ass:niceE})--(A\ref{ass:indepmeasop}) are fulfilled. Then,
        for any given measurement $\meas\in\reals^M$, \eqref{eq:ellpproblem}
        has a unique solution $\f_\meas\in\ell_p$. Moreover, it holds that
        \begin{equation} \label{eq:lp2toinf}
            \Vert \f_{\meas_1} - \f_{\meas_2} \Vert_{\ell_p} \leq
            \left(
                \frac{2^{p-2}\LipConstant_p}{\lambda p}
            \right)^{\frac{1}{p-1}}
            \Vert \meas_1 - \meas_2 \Vert_2^{\frac{1}{p-1}},
        \end{equation}
        for any two $\meas_1,\meas_2\in\reals^M$.
    \end{theorem}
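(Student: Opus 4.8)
The plan is to derive the estimate from the first-order optimality conditions of the two minimizers together with a uniform monotonicity bound for the gradient of the regularizer. Existence and uniqueness of $\f_{\meas_1}$ and $\f_{\meas_2}$ follow directly from Theorem~\ref{th:rep}: the pair $(\ell_p,\ell_q)$ is reflexive and strictly convex for $p\in(1,\infty)$, the map $x\mapsto\lambda x^p$ is strictly increasing and strictly convex on $\nnreals$, and (A\ref{ass:niceE}) supplies exactly the required properties of $E$. With uniqueness in hand, the solution map is well defined and it remains to prove \eqref{eq:lp2toinf}.

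For the quantitative part, note that both $\f\mapsto E(\meas,\measop(\f))$ and $\f\mapsto\lambda\Vert\f\Vert_{\ell_p}^p$ are convex and Gâteaux differentiable on $\ell_p$, so each minimizer is characterized by the stationarity condition $\nabla_{\f}E(\meas_i,\measop(\f_{\meas_i})) + \lambda\nabla\Vert\f_{\meas_i}\Vert_{\ell_p}^p = 0$ for $i\in\{1,2\}$, where $\nabla\Vert\f\Vert_{\ell_p}^p = p\,|\f|^{p-1}\sign(\f)$ componentwise (a multiple of the duality map into $\ell_q$). Subtracting the two conditions and pairing with $\f_{\meas_1}-\f_{\meas_2}$ in the $\ell_q\times\ell_p$ duality gives the identity $\lambda\langle\nabla\Vert\f_{\meas_1}\Vert_{\ell_p}^p-\nabla\Vert\f_{\meas_2}\Vert_{\ell_p}^p,\,\f_{\meas_1}-\f_{\meas_2}\rangle = -\langle\nabla_{\f}E(\meas_1,\measop(\f_{\meas_1}))-\nabla_{\f}E(\meas_2,\measop(\f_{\meas_2})),\,\f_{\meas_1}-\f_{\meas_2}\rangle$, which isolates the regularizer difference on the left and the data-term difference on the right.

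I would bound the left-hand side below using the sharp scalar inequality $\bigl(|a|^{p-1}\sign(a)-|b|^{p-1}\sign(b)\bigr)(a-b)\geq 2^{2-p}|a-b|^p$, valid for $p\geq 2$; summing over coordinates yields the uniform convexity estimate $\langle\nabla\Vert\f_{\meas_1}\Vert_{\ell_p}^p-\nabla\Vert\f_{\meas_2}\Vert_{\ell_p}^p,\,\f_{\meas_1}-\f_{\meas_2}\rangle\geq p\,2^{2-p}\Vert\f_{\meas_1}-\f_{\meas_2}\Vert_{\ell_p}^p$. For the right-hand side I would split the gradient difference as $\bigl(\nabla_{\f}E(\meas_1,\measop(\f_{\meas_1}))-\nabla_{\f}E(\meas_2,\measop(\f_{\meas_1}))\bigr) + \bigl(\nabla_{\f}E(\meas_2,\measop(\f_{\meas_1}))-\nabla_{\f}E(\meas_2,\measop(\f_{\meas_2}))\bigr)$. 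The second (point-change) bracket, paired with $\f_{\meas_1}-\f_{\meas_2}$, is nonnegative by convexity of $E(\meas_2,\measop(\cdot))$ from (A\ref{ass:niceE}), so it only helps and can be discarded; the first (measurement-change) bracket has $\ell_q$-norm at most $\LipConstant_p\Vert\meas_1-\meas_2\Vert_2$ by (A\ref{ass:LipschitzdiffE}), and Hölder's inequality in $\ell_q\times\ell_p$ bounds its pairing by $\LipConstant_p\Vert\meas_1-\meas_2\Vert_2\,\Vert\f_{\meas_1}-\f_{\meas_2}\Vert_{\ell_p}$.

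Combining the two estimates gives $\lambda\,p\,2^{2-p}\Vert\f_{\meas_1}-\f_{\meas_2}\Vert_{\ell_p}^p\leq\LipConstant_p\Vert\meas_1-\meas_2\Vert_2\,\Vert\f_{\meas_1}-\f_{\meas_2}\Vert_{\ell_p}$; dividing by $\Vert\f_{\meas_1}-\f_{\meas_2}\Vert_{\ell_p}$ (the degenerate case being trivial) and taking the $(p-1)$-th root produces exactly \eqref{eq:lp2toinf}. I expect the main obstacle to be establishing the monotonicity constant $2^{2-p}$ rigorously and verifying the differentiability and stationarity claims in the possibly infinite-dimensional $\ell_p$ setting; once the power-map inequality is secured, the remainder is bookkeeping, and the same argument transfers verbatim to $\Ell_p(\Omega)$ by replacing coordinate sums with integrals, as in Theorem~\ref{th:Lp2toinf}.
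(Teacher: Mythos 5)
Your proposal is correct and takes essentially the same route as the paper: the paper merely factors your argument into an abstract stability result (Theorem~\ref{lem:stable_abstract}, hypotheses (B1)--(B3)) and then verifies those hypotheses via Theorem~\ref{th:rep}, the monotonicity of the gradient of $E(\meas,\measop(\cdot))$, and the pointwise growth bound $(g_p(x)-g_p(y))(x-y)\geq 2^{2-p}\vert x-y\vert^p$ of Proposition~\ref{prop:gp2toinf}, which are exactly your ingredients (optimality conditions, discarded monotone bracket, scalar inequality, duality bound, Lipschitz assumption (A2)). Inlining that abstract step, as you do, changes nothing substantive, and your reduction to $\Ell_p(\Omega)$ by replacing sums with integrals matches the paper's Theorem~\ref{th:Lp2toinf}.
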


    The exponent $\beta =  1/(p-1)$ in \eqref{eq:lp2toinf} characterizes the
    continuity bound of Theorem~\ref{th:lp2toinf}. In particular, this
    exponent takes unit value for $p=2$, which indeed leads to Lipschitz
    continuity, \emph{cf.} \eqref{eq:local_Holder} with $\beta=1$ and $Y=\reals^M$. For
    $p>2$, the result is weaker because, in the low-noise
    regime where $\Vert\meas_1-\meas_2\Vert_2\rightarrow 0$, the
    bound on $\Vert\f_{\meas_1} - \f_{\meas_2}\Vert_{\ell_p}$ gets larger
    as $p$ increases.
    The change of the type of continuity result for different values of $p$ is
    an unexpected result. One might think that this is caused by the mismatch
    between the norms used in either side of \eqref{eq:lp2toinf}. However,
    because all norms are equivalent in the finite-dimensional space of measurements $\reals^M$,
    the exponent in \eqref{eq:lp2toinf} holds for every other norm in
    $\reals^M$, and the choice of any other specific norm would simply be absorbed by the
    preceding constant.

    The term $\LipConstant_p$ that appears in the leading constant of the
    bound characterizes the role of the data-fidelity term $E$ on the
    stability of the solution. Most importantly, and as opposed to what
    happens in \eqref{eq:CSLipschitz} with compressed sensing,
    all terms in the bound of Theorem~\ref{th:lp2toinf} can be computed or
    bounded in practical applications. In particular we evaluate our result
    in Section~\ref{sec:stability_l2lp} in the case of $\ell_p$-regularized
    least-squares problems, where $\LipConstant_p$ is tied to the norm of
    the measurement operator $\measop$.

\subsection{Local Lipschitz Continuity for
    \texorpdfstring{$p\in(1,2)$}{p between 1 and 2}}

    Now, we turn to the case $p\in(1,2)$ which describes the continuous
    bridge between sparsity-promoting regularization and Tikhonov
    regularization. Our mathematical treatment requires one additional
    assumption
    \begin{enumerate}[({A}1)]
    \setcounter{enumi}{3}
        \item There is at least one $\tilde\f$ where the
            data-fidelity term $E(\meas,\measop(\tilde\f))$ is continuous
            with respect to its first argument $\meas$.
            \label{ass:contE}
    \end{enumerate}

    \begin{theorem}[Local Lipschitz Continuity of $\ell_p$-Regularized Linear
        Inverse Problems, $p\in(1,2)$] \label{th:lp1to2}
        Consider the variational problem \eqref{eq:ellpproblem} with
        $p\in(1,2)$ and assume that
        (A\ref{ass:niceE})--(A\ref{ass:contE}) are fulfilled. Then,
        for any given measurement $\meas\in\reals^M$, \eqref{eq:ellpproblem}
        has a unique solution $\f_\meas\in\ell_p$. Moreover, for any
        compact set $Y\subset\reals^M$, there is a constant
        $r_p(Y)\in\nnreals$ such that
        $$
            \Vert \f_{\meas_1} - \f_{\meas_2} \Vert_{\ell_p} \leq
            \frac{(2\,r_p(Y))^{2-p}\LipConstant_p}{\lambda p(p-1)}
            \Vert \meas_1 - \meas_2 \Vert_2,
        $$
        for any two measurements $\meas_1,\meas_2\in Y$.
    \end{theorem}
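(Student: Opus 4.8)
The plan is to combine the first-order optimality conditions of the two problems with a quantitative strong-convexity estimate for the regularizer $\f\mapsto\lambda\Vert\f\Vert_{\ell_p}^p$. Existence and uniqueness of $\f_{\meas_1},\f_{\meas_2}$ follow immediately from Theorem~\ref{th:rep}: for $p\in(1,2)$ the space $\ell_p$ is reflexive and strictly convex, $E$ is strictly convex in its second argument by (A\ref{ass:niceE}), the functionals are linearly independent by (A\ref{ass:indepmeasop}), and $\psi(x)=\lambda x^p$ is strictly increasing and strictly convex. Writing $\f_i\coloneqq\f_{\meas_i}$ and using that $E$ is differentiable in its second argument (A\ref{ass:LipschitzdiffE}) and that $\Vert\cdot\Vert_{\ell_p}^p$ is differentiable for $p>1$, each minimizer satisfies $\nabla_\f\{E(\meas_i,\measop(\f_i))\}+\lambda p\,\mathcal J(\f_i)=0$, where $\mathcal J$ is the component-wise map $\f[k]\mapsto|\f[k]|^{p-2}\f[k]=\tfrac1p\nabla\Vert\f\Vert_{\ell_p}^p$, valued in $\ell_q$.

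Subtracting the two conditions and pairing with $\f_1-\f_2$ gives
\begin{equation*}
\lambda p\,\langle\mathcal J(\f_1)-\mathcal J(\f_2),\,\f_1-\f_2\rangle = -\langle\nabla_\f\{E(\meas_1,\measop(\f_1))\}-\nabla_\f\{E(\meas_2,\measop(\f_2))\},\,\f_1-\f_2\rangle.
\end{equation*}
I would split the right-hand side into a measurement-change term (at fixed $\f_1$) and a signal-change term (at fixed $\meas_2$). Since $\f\mapsto E(\meas_2,\measop(\f))$ is convex — $E$ is convex in its second argument and $\measop$ is linear — its gradient is monotone, so the signal-change term has a favorable sign and can be dropped. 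The measurement-change term is controlled by Hölder's inequality and (A\ref{ass:LipschitzdiffE}), giving
\begin{equation*}
\lambda p\,\langle\mathcal J(\f_1)-\mathcal J(\f_2),\,\f_1-\f_2\rangle \leq \LipConstant_p\,\Vert\meas_1-\meas_2\Vert_2\,\Vert\f_1-\f_2\Vert_{\ell_p}.
\end{equation*}

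The heart of the argument — and the step I expect to be the main obstacle — is the lower bound
\begin{equation*}
\langle\mathcal J(\f_1)-\mathcal J(\f_2),\,\f_1-\f_2\rangle \geq (p-1)\,\frac{\Vert\f_1-\f_2\Vert_{\ell_p}^2}{\bigl(\Vert\f_1\Vert_{\ell_p}+\Vert\f_2\Vert_{\ell_p}\bigr)^{2-p}},
\end{equation*}
which is the quantitative form of the modulus of convexity of $\ell_p$ for $p\in(1,2)$ (its $2$-uniform convexity). I would prove it in two stages. Componentwise, the scalar identity $|b|^{p-2}b-|a|^{p-2}a=(p-1)(b-a)\int_0^1|a+t(b-a)|^{p-2}\dx t$ together with $|a+t(b-a)|\leq|a|+|b|$ (so $|\cdot|^{p-2}\geq(|a|+|b|)^{p-2}$ since $p-2<0$) yields the pointwise estimate $(\mathcal J(\f_1)[k]-\mathcal J(\f_2)[k])(\f_1[k]-\f_2[k])\geq(p-1)|\f_1[k]-\f_2[k]|^2(|\f_1[k]|+|\f_2[k]|)^{p-2}$; the integral is finite because $p-2>-1$. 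Summing over $k$ and applying Hölder's inequality with the conjugate exponents $2/p$ and $2/(2-p)$ to $\sum_k|\f_1[k]-\f_2[k]|^p$ converts the weighted sum into the displayed ratio, after using $\sum_k(|\f_1[k]|+|\f_2[k]|)^p\leq(\Vert\f_1\Vert_{\ell_p}+\Vert\f_2\Vert_{\ell_p})^p$. Combining with the previous display and cancelling one factor of $\Vert\f_1-\f_2\Vert_{\ell_p}$ gives the bound with $(\Vert\f_1\Vert_{\ell_p}+\Vert\f_2\Vert_{\ell_p})^{2-p}$ in the numerator.

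Since $2-p>0$, this constant grows with the solution norms, so the estimate is only local; I would close by producing a uniform radius $r_p(Y)$ with $\Vert\f_\meas\Vert_{\ell_p}\leq r_p(Y)$ for all $\meas\in Y$, which is exactly where (A\ref{ass:contE}) enters. Testing optimality of $\f_\meas$ against the distinguished point $\tilde\f$ gives $\lambda\Vert\f_\meas\Vert_{\ell_p}^p\leq E(\meas,\measop(\tilde\f))+\lambda\Vert\tilde\f\Vert_{\ell_p}^p$; since $\meas\mapsto E(\meas,\measop(\tilde\f))$ is continuous and $Y$ is compact, the right-hand side is bounded uniformly over $Y$, defining $r_p(Y)$. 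Substituting $\Vert\f_1\Vert_{\ell_p}+\Vert\f_2\Vert_{\ell_p}\leq 2r_p(Y)$ yields the stated bound. The argument transcribes verbatim to $\Ell_p(\Omega)$ with sums replaced by integrals, which is why the same proof establishes Theorem~\ref{th:Lp1to2}.
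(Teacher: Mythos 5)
Your proof is correct and follows essentially the same route as the paper: existence and uniqueness via the representer theorem, optimality conditions plus monotonicity of the data-fidelity gradient to isolate the regularizer, the pointwise bound $(g_p(x)-g_p(y))(x-y)\geq (p-1)(|x|+|y|)^{p-2}(x-y)^2$ combined with H\"older's inequality at exponents $2/p$ and $2/(2-p)$, and the uniform norm bound $r_p(Y)$ obtained from (A4) and compactness of $Y$. The only difference is cosmetic: you derive the scalar estimate from the integral identity $g_p(b)-g_p(a)=(p-1)(b-a)\int_0^1|a+t(b-a)|^{p-2}\,\mathrm{d}t$, which is a cleaner, case-free alternative to the paper's Lemma~\ref{lem:gp1to2} (two mean-value-theorem applications plus the intermediate-value theorem), and you inline the abstract stability argument (Theorem~\ref{lem:stable_abstract}) rather than invoking it as a separate step.
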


    We thus recover again Lipschitz continuity for $p\rightarrow 2$, for which
    the bound coincides with the one in Theorem~\ref{th:lp2toinf}.
    For $p\in(1,2)$, we achieve Lipschitz continuity,
    albeit only locally in the space $\reals^M$ of measurements.
    Assumption (A\ref{ass:contE}) guarantees the existence of
    the constant $r_p(Y)$. We defer to
    Section~\ref{sec:mathematical_formalization} the establishment of an
    easy-to-evaluate upper bound for this constant.

\subsection{\texorpdfstring{$\ell_p$}{lp}-Regularized Least Squares}
    \label{sec:stability_l2lp}

    Regularized least squares, where the data-fidelity term $E$ is given by
    \eqref{eq:LSdatafidelity}, is popular in many applications. In this
    section, we choose $\ell_p$-regularized least squares to illustrate our
    findings expressed in Theorems~\ref{th:lp2toinf} and~\ref{th:lp1to2}. For
    this scenario, (A\ref{ass:niceE}) and (A\ref{ass:contE}) hold due to the
    the properties of the $\ell_2$ norm in finite-dimensional spaces.
    For (A\ref{ass:LipschitzdiffE}), we have that $E$ is differentiable with
    gradient
    $$
        E'(\meas) =
        \boldsymbol{\nabla}_{\f}\left\lbrace E(\meas, \measop(\f))\right\rbrace
        = \measop^* (\measop \f - \meas),
    $$
    where $\measop^*\colon\reals^M \to \ell_{q}$ is the
adjoint    of the measurement operator $\measop$, with
$q=(1-1/p)^{-1}$.
    Thus,
    $$
        E'(\meas_1) - E'(\meas_2) = \measop^* (\meas_2-\meas_1)
    $$
    and, therefore,
    $$
        \LipConstant_p = \Vert \measop \Vert_{\LinOp{\ell_p}{\reals^M}}
                        \coloneqq \sup_{\f\in\ell_p}
                            \frac{\Vert\measop(\f)\Vert_{2}}{
                            \Vert\f\Vert_{\ell_p}},
    $$
    where $\Vert \cdot \Vert_{\LinOp{\ell_p}{\reals^M}}$ is the norm of an
    operator from $\ell_p$ to $\reals^M$.
    For a generic value of $p$, this norm might be challenging to
    compute or approximate. However, if $p$ is between two values for which we
    know how to compute it, we can use the Riesz-Thorin theorem to bound it
    from above. For instance, for $p\in(1,2)$ and $\theta_p=(2-2/p)$, we
    obtain that
    $$
        \Vert\measop\Vert_{\LinOp{\ell_p}{\reals^M}} \leq
        \Vert\measop\Vert_{\LinOp{\ell_1}{\reals^M}}^{1-\theta_p}
        \Vert\measop\Vert_{\LinOp{\ell_2}{\reals^M}}^{\theta_p}.
    $$

    \subsubsection{Revisiting \texorpdfstring{$\ell_2$}{l2}-Regularized Least
        Squares}

        For $\ell_2$, as for any other Hilbert space $\H$, we have that
        $$
            \Vert\measop\Vert_{\LinOp{\ell_2}{\reals^M}}
            = \max_{m\in\lbrace1,\dots,M\rbrace}
                                                    \sqrt{\sigma_m},
        $$
        where $\sigma_m$ is defined with respect to the Gram matrix $\matrix$
        as in Section~\ref{sec:l2}. Therefore, for $p=2$, the bounds in both
        Theorems~\ref{th:lp2toinf} and \ref{th:lp1to2} evaluate to
        \begin{equation} \label{eq:l2loose}
            \Vert \f_{\meas_1} - \f_{\meas_2} \Vert_{\ell_2}
            \leq \max_{m\in\lbrace1,\dots,M\rbrace}
                    \frac{\sqrt{\sigma_m}}{2 \lambda}
            \Vert\meas_1 - \meas_2\Vert_2\,.
        \end{equation}
        This bound is consistent with \eqref{eq:l2tightest}, but
        looser. In particular, it does
        not take into account the assymptotic regime visible in
        Figure~\ref{fig:l2_experimental_results}, where the norm of the
        measurement operator dominates. Instead, it characterizes
        \eqref{eq:ellpproblem} in the strongly regularized
        regime that corresponds to large $\lambda$. We verify this insight
        providing in Figure~\ref{fig:bothbounds} a visual comparison of
        \eqref{eq:l2tightest} and \eqref{eq:l2loose} in terms of $\lambda$.
        This relative behavior is not surprising because
        Theorems~\ref{th:lp2toinf} and~\ref{th:lp1to2} do not exploit the
        linearity of the Riesz map, which leads to the closed-form solution
        \eqref{eq:closed_form_ls-tikhonov} and, thereby, to a more nuanced
        understanding of the effect of the norm of the measurement operator on
        stability. Here, we note that the behavior observed
        in Figure~\ref{fig:bothbounds} is to be expected: increased
        regularization leads to more stable solution maps. However,
        the value of the regularization parameter $\lambda$ still needs
        to be tuned to obtain the least error, not the most stable
        reconstruction map.

        \begin{figure}
            \input{figs/bothbounds_expresults}

            \vspace{-20pt}

            \caption{Lipschitz constant derived in \eqref{eq:l2tightest}
                    compared to \eqref{eq:l2loose}, the bound obtained in
                    Theorems~\ref{th:lp2toinf} and~\ref{th:lp1to2} for $p=2$,
                    in terms of the regularization parameter
                    $\lambda\in\nnreals$. The experimental values are as in
                    Figure~\ref{fig:l2_experimental_results}.
                \label{fig:bothbounds}}
        \end{figure}

    \subsubsection{Local Behavior of \texorpdfstring{$\ell_p$}{lp}-Regularized
                   Least Squares for
                    \texorpdfstring{$p\in(1,2)$}{p between 1 and 2}}

        As example of the local behavior ruled by
        Theorem~\ref{th:lp1to2}, we now discuss (A\ref{ass:contE}) in more
        details and exemplify the local constant $r_p(Y)$ when $Y$ is the
        closed ball
        $Y = \lbrace \meas\in \reals^M : \Vert\meas\Vert_2\leq\rho\rbrace$.
        Essentially, $r_p(Y)$ bounds the norm of the optimal solution
        and (A\ref{ass:contE}) provides a (loose) bound
        based on the continuity of the overall cost function at
        $\tilde\f$ (see Section~\ref{sec:mathematical_formalization}).

        Because the least-squares data-fidelity term is continuous for any
        $\f\in\ell_p$, we may choose $\tilde\f=0$ for simplicity. This results
        in $r_p(Y) = (\rho^2 / 2\lambda )^{1/p}$ and, thus,
        Theorem~\ref{th:lp1to2} implies, for $p\in(1,2)$ and for
        $\meas_1,\meas_2\in Y$ as above, that
        $$
        \frac{\Vert \f_{\meas_1} - \f_{\meas_2} \Vert_{\ell_p}}{
                \Vert \meas_1 - \meas_2\Vert_2}
        \leq \left(2^{p-1} \frac{\rho^2}{\lambda}\right)^{\frac{2-p}{p}} \!\!
                \frac{1}{\lambda p(p-1)}
                \Vert \measop \Vert_{\LinOp{\ell_p}{ \reals^M}}.
        $$

\section{Mathematical Formalization} \label{sec:mathematical_formalization}
    % !TEX root = ../main.tex
We adopt a functional formulation to prove the results of
Theorems~\ref{th:lp2toinf} and~\ref{th:lp1to2} in full
generality. The interested reader who would not be
acquainted with the terminology is referred to
Appendix~\ref{app:premath}, which includes a curated
selection of definitions and discussions.

We start by formalizing our assumptions and statements
with respect to the optimization problem
\begin{equation} \label{eq:Ellpproblem}
    \min_{\f\in\Ell_p(\Omega)} \bigl\lbrace E\left(\meas,\measop(\f)\right)
                                    + \lambda \Vert \f \Vert^p_{\Ell_p(\Omega)}
                       \bigr\rbrace,
\end{equation}
which generalizes the analysis to function spaces $\Ell_p(\Omega)$ over a
domain $\Omega\subset\reals^d$. The choice of a countable or finite $\Omega$
equipped with the counting measure particularizes \eqref{eq:Ellpproblem} to
either \eqref{eq:synthesis_form} or \eqref{eq:ellpproblem}, respectively.

Similarly to the relationship between \eqref{eq:image_rec} and
\eqref{eq:synthesis_form}, \eqref{eq:Ellpproblem} can be seen as the synthesis
formulation of the reconstruction problem
$$
    \min_{\tilde\f\in\Ell_{p,\mathrm{L}}(\Omega)} \left\lbrace
                            E\bigl(\meas,\tilde\measop(\tilde\f)\bigr)
                    + \lambda \Vert \mathrm{L}\tilde\f \Vert^p_{\Ell_p(\Omega)}
                       \right\rbrace,
$$
where the regularization operator $\mathrm{L}$ is invertible, $\f = \mathrm{L}\tilde\f$,
and $\measop = \tilde\measop \circ \mathrm{L}^{-1}$. This type of variational
problem has been used, for example, for spline-based interpolation and
approximation
\cite{deBoor1976,MicSmiSweWar1985,Lavery2000,AuqGibNyi2007,BohUns2020},
inverse diffusion~\cite{delAguilaPla2018,delAguilaPla2018a},
and inverse scattering~\cite{LecKazKar13}.

In Sections~\ref{app:abstract} and~\ref{app:MVT}, we present results that are
instrumental to complete our proofs.

\subsection{Abstract Characterization of Stability in Variational Inverse
        Problems} \label{app:abstract}
        % !TEX root = ../main.tex
Let us consider the generic optimization problem
\begin{equation} \label{eq:anyJ}
    \min_{\f\in\Xdual} J(\meas,\f),
\end{equation}
with a cost functional $J\colon\reals^M\times\Xdual\to \reals$.
We assume here that the optimization is performed over a
Banach space $\Xdual$ \emph{with a predual $\X$}, in
accordance with a more general version of Theorem~\ref{th:rep}
(see~\cite{Unser2021}). However, this does not impact the applicability of the
results in this section to \eqref{eq:Ellpproblem} because $\Ell_p(\Omega)$ spaces are
reflexive.
For the cost functional $J$, we assume that
\begin{enumerate}[({B}1)]
    \item for any given $\meas\in\reals^M$, there is a unique solution
        $\f_\meas$ to \eqref{eq:anyJ}. Thus, the solution map
        $\solutionmap\colon \reals^M \to \Xdual$ with $\meas \mapsto \f_\meas$
        is well defined. \label{ass:uniquesol}
    \item The cost function $J$ is differentiable through its second
        argument, and there is a constant $\LipConstant>0$ such that,
        if
        \begin{equation} \label{eq:subdiff_ij}
            \el_{i,j}= \nabla_\f \left\lbrace
                                        J(\meas_i,\f)
                                    \right\rbrace(\f_{\meas_j}) \in \Xdual'
           \mbox{ for }i,j\in\lbrace1,2\rbrace\,,
        \end{equation}
        we have that, for any $\meas_1, \meas_2 \in \reals^M$,
        \begin{equation}\label{eq:Lipschitzgradient}
            \Vert \el_{1,1} - \el_{2,1} \Vert_{\Xdual'} \leq
            \LipConstant \Vert \meas_1 - \meas_2 \Vert_2\,.
        \end{equation}
        \label{ass:lipgrad}
        \item There is a subset $Y\subset \reals^M$ and constants
        $\alpha\geq 2$ and $C(Y)\in \reals_+$ such that
        \begin{equation} \label{eq:sconvex-bis}
            \left\langle \el_{1,1} - \el_{1,2},
                    \f_{\meas_1} - \f_{\meas_2}
            \right\rangle_{\Xdual'\times\Xdual}
            \geq C(Y) \Vert \f_{\meas_1} - \f_{\meas_2} \Vert_{\Xdual}^\alpha
        \end{equation}
        for all $\meas_1,\meas_2 \in Y$.
        \label{ass:sconvex}
\end{enumerate}

\begin{theorem}[Local H\"{o}lder Stability of
                \eqref{eq:anyJ}]\label{lem:stable_abstract}
    Consider the variational problem \eqref{eq:anyJ}. Assume
    (B\ref{ass:uniquesol})--(B\ref{ass:sconvex}) are fulfilled.
    Then,
    \begin{equation}\label{eq:stability_wsconvex}
        \Vert \f_{\meas_1} - \f_{\meas_2} \Vert_{\Xdual} \leq
        \left(\frac{\LipConstant}{C(Y)}\right)^{\frac{1}{\alpha-1}}
        \Vert \meas_1 - \meas_2 \Vert_2^{\frac{1}{\alpha-1}}
    \end{equation}
    for all $\meas_1,\meas_2 \in Y$, and $\solutionmap$ is Hölder continuous
    on $Y$.
\end{theorem}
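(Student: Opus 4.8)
The plan is to combine the first-order optimality conditions at the two solutions with the monotonicity-type lower bound (B\ref{ass:sconvex}) and the Lipschitz-in-measurement upper bound (B\ref{ass:lipgrad}). Since, by (B\ref{ass:uniquesol}), $\f_{\meas_1}$ and $\f_{\meas_2}$ are the global, unconstrained minimizers of the differentiable functionals $J(\meas_1,\cdot)$ and $J(\meas_2,\cdot)$ over all of $\Xdual$, Fermat's rule forces the respective G\^{a}teaux derivatives to vanish there. In the notation of \eqref{eq:subdiff_ij}, this reads $\el_{1,1}=0$ and $\el_{2,2}=0$ in $\Xdual'$. This is the step that turns the abstract derivatives into objects we can pair against the measurement difference.

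Using these two identities, I would rewrite the left-hand side of \eqref{eq:sconvex-bis}. Because $\el_{1,1}=0$ we have $\el_{1,1}-\el_{1,2}=-\el_{1,2}$, and because $\el_{2,2}=0$ we may subtract it freely, so that
\[
\left\langle \el_{1,1}-\el_{1,2},\,\f_{\meas_1}-\f_{\meas_2}\right\rangle_{\Xdual'\times\Xdual}
= \left\langle \el_{1,2}-\el_{2,2},\,\f_{\meas_2}-\f_{\meas_1}\right\rangle_{\Xdual'\times\Xdual}.
\]
The quantity $\el_{1,2}-\el_{2,2}$ is precisely the difference of the derivatives of $J(\meas_1,\cdot)$ and $J(\meas_2,\cdot)$ evaluated at the \emph{same} point $\f_{\meas_2}$, which is exactly what (B\ref{ass:lipgrad}) controls. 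Applying the duality pairing inequality $|\langle u,v\rangle_{\Xdual'\times\Xdual}|\leq \Vert u\Vert_{\Xdual'}\Vert v\Vert_{\Xdual}$ followed by \eqref{eq:Lipschitzgradient} at the evaluation point $\f_{\meas_2}$ bounds the right-hand side by $\LipConstant\,\Vert\meas_1-\meas_2\Vert_2\,\Vert\f_{\meas_1}-\f_{\meas_2}\Vert_{\Xdual}$. Chaining this with the lower bound of (B\ref{ass:sconvex}) yields
\[
C(Y)\,\Vert\f_{\meas_1}-\f_{\meas_2}\Vert_{\Xdual}^{\alpha}\leq \LipConstant\,\Vert\meas_1-\meas_2\Vert_2\,\Vert\f_{\meas_1}-\f_{\meas_2}\Vert_{\Xdual}.
\]
If $\f_{\meas_1}=\f_{\meas_2}$ the claim is trivial; otherwise I divide by $\Vert\f_{\meas_1}-\f_{\meas_2}\Vert_{\Xdual}>0$, solve for $\Vert\f_{\meas_1}-\f_{\meas_2}\Vert_{\Xdual}^{\alpha-1}$, and take the $(\alpha-1)$-th root (legitimate since $\alpha\geq 2$) to obtain \eqref{eq:stability_wsconvex}. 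H\"{o}lder continuity on $Y$ with exponent $1/(\alpha-1)$ is then immediate.

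The substantive points—rather than the short chain of inequalities above—are twofold. First, one must justify $\el_{j,j}=0$: this is Fermat's rule in a Banach space, which needs only the differentiability assumed in (B\ref{ass:lipgrad}) together with the fact that the minimization in \eqref{eq:anyJ} is over the whole space, so the minimizer is interior and no projection onto a subdifferential cone is required. Second, (B\ref{ass:lipgrad}) is applied at the evaluation point $\f_{\meas_2}$ rather than at $\f_{\meas_1}$ as literally displayed in \eqref{eq:Lipschitzgradient}; this is harmless because the constant $\LipConstant$ there is uniform over the evaluation point—it originates from the Lipschitz-in-$\meas$ property of the gradient that holds ``for any $\f$'', cf. (A\ref{ass:LipschitzdiffE})—so the displayed pair $\el_{1,1}-\el_{2,1}$ is merely representative. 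I expect this uniformity, and the careful bookkeeping of which point each derivative is evaluated at, to be the only real obstacle; neither compactness nor reflexivity of $\Xdual$ enters this argument, as those are used only to verify (B\ref{ass:uniquesol}) and to produce $C(Y)$ in the concrete $\ell_p$ and $\Ell_p(\Omega)$ instances.
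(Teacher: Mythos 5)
Your proof is correct and is essentially the paper's own argument: vanishing gradients $\el_{i,i}=0$ at the two minimizers, the monotonicity lower bound (B\ref{ass:sconvex}), the duality bound, the Lipschitz bound (B\ref{ass:lipgrad}), and rearrangement. The only difference is bookkeeping—the paper swaps the roles of $\meas_1$ and $\meas_2$ inside (B\ref{ass:sconvex}) so that (B\ref{ass:lipgrad}) applies literally at $\f_{\meas_1}$, whereas you apply (B\ref{ass:sconvex}) literally and obtain the needed bound at $\f_{\meas_2}$; the latter follows from the same relabeling symmetry (since (B\ref{ass:lipgrad}) holds for all pairs $\meas_1,\meas_2$), so no appeal to an extra uniformity property is actually required.
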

\begin{proof}
    From the optimality of $\f_{\meas_i}$, we have that
     $\el_{i,i}=0$ for $i \in\lbrace1,2\rbrace$.
    Then, we have that
    \begin{align}
        C(Y) \Vert \f_{\meas_1} - \f_{\meas_2} \Vert_{\Xdual}^\alpha
        &\leq \bigl\langle \el_{2,1} - \el_{2,2},
                            \f_{\meas_1} - \f_{\meas_2}
\bigr\rangle_{\Xdual'\times\Xdual} \label{eq:lsconvex}\\
        &\leq \Vert \el_{2,1} - \el_{2,2} \Vert_{\X''}
                \Vert \f_{\meas_1} - \f_{\meas_2} \Vert_{\Xdual} \label{eq:ldualitybound} \\
        &= \Vert \el_{2,1} - \el_{1,1} \Vert_{\X''}
            \Vert \f_{\meas_1} - \f_{\meas_2} \Vert_{\Xdual} \label{eq:lzeros}\\
        & \leq \LipConstant \Vert \meas_1 - \meas_2 \Vert_2
            \Vert \f_{\meas_1} - \f_{\meas_2} \Vert_{\Xdual} \label{eq:llipschitz},
    \end{align}
    where \eqref{eq:lsconvex} makes use of \eqref{eq:sconvex-bis},
    \eqref{eq:ldualitybound} follows from the duality bound in
    Definition~\ref{def:duality_bound}, \eqref{eq:lzeros}
    is obtained from $\el_{1,1} = \el_{2,2} = 0$, and
    \eqref{eq:llipschitz} makes use of \eqref{eq:Lipschitzgradient}.
    The statement \eqref{eq:stability_wsconvex} then follows readily by
    rearrangement of the terms above.
\end{proof}

\begin{remark}
    The $\alpha$-uniform convexity of $J$ with respect to $\f$ implies
    (B\ref{ass:sconvex}). (See~\cite{Zalinescu2002} for an
    extensive treatment of uniformly convex functions, and particularly
    Corollary~3.5.11 for a detailed account of equivalent conditions.)
\end{remark}

\subsection{Growth of the Gradient of
        \texorpdfstring{$\Ell_p$}{Lp} Regularizers} \label{app:MVT}
        % !TEX root = ../main.tex
In our proofs, we need to verify that \eqref{eq:Ellpproblem} fulfills
(B\ref{ass:sconvex}). To do so, we rely on two results on the
gradient of the regularizer in \eqref{eq:Ellpproblem}, namely, on the
gradient of the $p$th power of the $\Ell_p$ norm.
In order to simplify the notation, we introduce a useful function
in Definition~\ref{def:gp}.

\begin{definition}[Gradient of the $\Ell_p$ Regularizer]
\label{def:gp}
	Let $g_p\colon \reals \rightarrow \reals$ be such that
	\begin{equation}\label{eq:gp}
		g_p(x) = \sign(x) \vert x \vert^{p-1} \mbox{ for any }x\in\reals.
	\end{equation}
	Here, $\sign(x)=x/\vert x\vert$ for
	$x\in\reals\setminus\lbrace0\rbrace$ and
	$\sign(0)=0$.
\end{definition}
Then, the gradient of the $\Ell_p$ regularizer evaluates to
$$
    \nabla_\f\bigl\lbrace \Vert \f \Vert^p_{\Ell_p}\bigr\rbrace
    = p ( g_p \circ f ).
$$

\begin{remark}
    Since $g_p \circ \f$ is proportional to the differential of a functional
    defined on $\Ell_p(\Omega)$ at $\f\in\Ell_p(\Omega)$, we have that
    $g_p\circ\f\in\Ell_q(\Omega)$ for any $\f\in\Ell_p(\Omega)$ (see also
    Remark~\ref{rem:gradient}).
\end{remark}

Together, Proposition~\ref{prop:gp2toinf} and Lemma~\ref{lem:gp1to2}
characterize the function in \eqref{eq:gp} for $p\in(1,\infty)$. First, a
known result~\cite{Zal83} bounds the growth of $g_p$ for $p\in[2,\infty)$.

\begin{proposition}[Growth of $g_p$ for
                    $p\in[2,\infty)$~\cite{Zal83}] \label{prop:gp2toinf}
    The function $g_p$ in Definition~\ref{def:gp}, for $x,y\in\reals$
    and $p\in[2,\infty)$, satisfies that
    \begin{equation}\label{eq:lower_bound_geq}
        (g_p(x) - g_p(y))(x-y) \geq 2^{2-p} \vert x-y \vert^p.
    \end{equation}
\end{proposition}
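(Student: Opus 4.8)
The plan is to recognize \eqref{eq:lower_bound_geq} as a convexity fact about $t\mapsto t^{p-1}$ disguised through $g_p$, and to prove it by a short case analysis on the signs of $x$ and $y$. First I would record two symmetries of the statement that cut down the work. The left-hand side $(g_p(x)-g_p(y))(x-y)$ is symmetric under exchanging $x$ and $y$, and, since $g_p$ is odd (i.e. $g_p(-x)=-g_p(x)$), the whole inequality is invariant under the joint flip $(x,y)\mapsto(-x,-y)$. The right-hand side shares both symmetries. Hence it suffices to treat two cases: $x$ and $y$ of the same sign, and $x$ and $y$ of opposite sign.

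In the same-sign case I would use oddness to reduce to $x\geq y\geq 0$, the case $x=y$ being trivial. There $g_p(x)=x^{p-1}$ and $g_p(y)=y^{p-1}$, so after dividing the desired inequality by $x-y>0$ it remains to show $x^{p-1}-y^{p-1}\geq(x-y)^{p-1}$, i.e. superadditivity of $t\mapsto t^{p-1}$ on $\nnreals$. This follows by monotonicity of the auxiliary function $h(x)=x^{p-1}-(x-y)^{p-1}-y^{p-1}$: one has $h(y)=0$ and, for $p\geq 2$, $h'(x)=(p-1)\bigl(x^{p-2}-(x-y)^{p-2}\bigr)\geq 0$ because $t\mapsto t^{p-2}$ is nondecreasing and $x\geq x-y\geq 0$; thus $h\geq 0$. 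Since $2^{2-p}\leq 1$ for $p\geq 2$, this delivers $(g_p(x)-g_p(y))(x-y)=(x^{p-1}-y^{p-1})(x-y)\geq(x-y)^p\geq 2^{2-p}\vert x-y\vert^p$, so the same-sign case holds with room to spare.

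In the opposite-sign case, say $x>0>y$, I would set $a=\vert x\vert$ and $b=\vert y\vert$, so that $x-y=a+b=\vert x-y\vert$ while $g_p(x)-g_p(y)=a^{p-1}+b^{p-1}$. The claim then reduces to $a^{p-1}+b^{p-1}\geq 2^{2-p}(a+b)^{p-1}$, which is precisely midpoint convexity of the convex map $t\mapsto t^{p-1}$ (convex since $p-1\geq 1$): indeed $\tfrac12(a^{p-1}+b^{p-1})\geq\bigl(\tfrac{a+b}{2}\bigr)^{p-1}=2^{1-p}(a+b)^{p-1}$, which rearranges to exactly the stated bound. This is where the constant comes from, and equality at $a=b$ (that is, $x=-y$) shows that $2^{2-p}$ is sharp.

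I do not expect a serious obstacle here, since the result is ultimately one convexity inequality. The one point that needs care is the sign bookkeeping that glues the two halves together: the equal-sign half uses superadditivity of $t\mapsto t^{p-1}$ and is far from tight, whereas the opposite-sign half uses its midpoint convexity and is the binding case that fixes the value $2^{2-p}$. Keeping these two estimates matched to the correct sign configuration, and handling the degenerate subcases ($x=y$, or one of $x,y$ equal to $0$) separately, is the only subtlety in an otherwise elementary argument.
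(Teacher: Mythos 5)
Your proof is correct, and it is worth noting that the paper itself does not prove this proposition at all: it imports the inequality as a known result from the literature on uniformly convex functions (the citation to Zălinescu), and only proves the companion statement for $p\in(1,2)$ (Lemma~\ref{lem:gp1to2}). Your argument therefore supplies something the paper leaves external. Both of your cases check out: the same-sign case reduces, via oddness and exchange symmetry, to $x\geq y\geq 0$, where your monotonicity argument for $h(x)=x^{p-1}-(x-y)^{p-1}-y^{p-1}$ correctly yields the superadditivity $x^{p-1}-y^{p-1}\geq (x-y)^{p-1}$ and hence the bound with constant $1\geq 2^{2-p}$; the opposite-sign case correctly reduces, after writing $a=\vert x\vert$, $b=\vert y\vert$, to $a^{p-1}+b^{p-1}\geq 2^{2-p}(a+b)^{p-1}$, which is exactly midpoint convexity of $t\mapsto t^{p-1}$. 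Your observation that equality holds at $x=-y$ also establishes sharpness of the constant, a point the paper never addresses. The trade-off is the usual one: the paper's citation route gets the result in one line and connects it to the general machinery of moduli of convexity (which is the conceptual home of the inequality and of assumption (B3)), while your two-case argument is elementary, self-contained, and makes visible which sign configuration forces the constant $2^{2-p}$.
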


Second, we show that $g_p$ also grows controllably for $p\in(1,2)$.

\begin{lemma}[Growth of $g_p$ for $p\in(1,2)$]\label{lem:gp1to2}
    The function $g_p$ in Definition~\ref{def:gp}, for $x,y\in\reals$
    and $p\in(1,2)$, satisfies that
    \begin{align}\label{eq:subdiff_prop}
        g_p(x) - g_p(y) = (p-1)\, z^{p-2}
                \left(x - y\right)
    \end{align}
    for some $z\leq \vert x \vert + \vert y \vert$.
\end{lemma}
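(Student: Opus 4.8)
The plan is to prove the identity
$$
g_p(x) - g_p(y) = (p-1)\, z^{p-2}(x-y)
$$
for some $z$ with $|z| \leq |x|+|y|$ by a mean-value-theorem argument, handling separately the region where $x$ and $y$ have the same sign and the region where they differ.

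First I would dispose of the trivial and sign-aligned cases. If $x=y$, any $z$ works. If $x,y$ are both nonnegative (or both nonpositive), then on the relevant interval $g_p$ coincides with $t \mapsto |t|^{p-1}$ up to a common sign, which is $C^1$ away from the origin with derivative $(p-1)|t|^{p-2}$. Applying the classical mean value theorem to $g_p$ on the segment between $x$ and $y$ yields a point $\xi$ strictly between them with $g_p(x)-g_p(y) = g_p'(\xi)(x-y) = (p-1)|\xi|^{p-2}(x-y)$, so setting $z=\xi$ gives $|z| \leq \max(|x|,|y|) \leq |x|+|y|$, as required. The subtlety here is that for $p\in(1,2)$ the derivative $(p-1)|t|^{p-2}$ blows up at $0$, so I must make sure the MVT is applied on an interval that does not contain the origin in its interior; when $x$ and $y$ share a sign this is automatic unless one of them is $0$, and the case where an endpoint is $0$ can be treated as a limit or handled directly.

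The genuinely interesting case is $x>0>y$ (and symmetrically $x<0<y$). Here $g_p$ is not differentiable at the intermediate point $0$, so the plain MVT does not apply on $[y,x]$. The idea is to write the increment as a sum across the origin, $g_p(x)-g_p(y) = (g_p(x)-g_p(0)) + (g_p(0)-g_p(y))$, apply the same-sign argument to each piece to obtain $g_p(x)-g_p(0)=(p-1)\xi_1^{p-2}x$ with $\xi_1\in(0,x)$ and $g_p(0)-g_p(y)=(p-1)|\xi_2|^{p-2}(-y)$ with $\xi_2\in(y,0)$, and then combine. Since $x-y = x + (-y) > 0$, the quantity $(g_p(x)-g_p(y))/(x-y)$ is a convex combination (with nonnegative weights $x/(x-y)$ and $-y/(x-y)$) of $(p-1)\xi_1^{p-2}$ and $(p-1)|\xi_2|^{p-2}$. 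By the intermediate value theorem applied to the continuous, positive function $t\mapsto (p-1)|t|^{p-2}$ restricted to either $[\xi_1, \text{large}]$ or an analogous range, there exists a single $z$ for which $(p-1)|z|^{p-2}$ equals this weighted average, giving the claimed form $g_p(x)-g_p(y)=(p-1)|z|^{p-2}(x-y)$. (Since $z$ enters only through $|z|^{p-2}$, writing $z^{p-2}$ in the statement is understood with $|z|$; the bound $|z|\leq|x|+|y|$ is what must be checked.)

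The main obstacle, and the step deserving the most care, is verifying the bound $|z|\leq|x|+|y|$ in the mixed-sign case. Because $p-2<0$, the map $|t|^{p-2}$ is \emph{decreasing} in $|t|$, so a weighted average of two large values $(p-1)|\xi_1|^{p-2}$ and $(p-1)|\xi_2|^{p-2}$ corresponds to a value of $|z|$ that is no larger than $\max(|\xi_1|,|\xi_2|)$; hence $|z|\leq\max(\xi_1,|\xi_2|)\leq\max(x,|y|)\leq|x|+|y|$. The reason the weaker bound $|x|+|y|$ (rather than $\max(|x|,|y|)$) is stated is precisely to absorb this mixed-sign construction uniformly. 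I would therefore organize the proof so that the monotonicity of $|t|^{p-2}$ is invoked explicitly to place $z$ between the two interior points, making the bound transparent and avoiding any case-dependent constant.
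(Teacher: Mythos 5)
Your proposal is correct and follows essentially the same route as the paper's proof: the mean value theorem handles the same-sign case, and the mixed-sign case is treated by splitting at the origin, applying the MVT to each half, writing the increment as a convex combination with weights $x/(x-y)$ and $-y/(x-y)$, and invoking the intermediate value theorem for $t\mapsto |t|^{p-2}$ to produce a single $z$. Your explicit use of the monotonicity of $|t|^{p-2}$ to conclude $|z|\leq\max(\xi_1,|\xi_2|)\leq |x|+|y|$ is the same fact the paper uses implicitly when it locates $z$ between the two intermediate points and bounds $z\leq\max_i z_i$.
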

\begin{proof}
    Note that $g_p$ is a differentiable function in
    $\reals\setminus\lbrace0\rbrace$, with derivative
    $g_p'(x) = (p-1) \vert x \vert^{p-2}$.

    If $xy\geq0$ (\emph{i.e.}, $x$ and $y$ have the same sign or one is zero), then
    \eqref{eq:subdiff_prop} corresponds to the statement of the mean-value
    theorem with $z\in[\vert x \vert, \vert y \vert]$ and, thus,
    $z\leq \vert x \vert + \vert y \vert$.

    If $xy<0$, assume without loss of generality that $x>0>y$. Then, by applying
    the mean-value theorem twice (once between $x$ and $0$, and once between
    $0$ and $y$), we obtain that
    \begin{align*}
        \Delta g_p = g_p(x) - g_p(y) = (p-1) \bigl(z_1^{p-2} x
                - z_2^{p-2} y \bigr),
    \end{align*}
    with $z_1\in[0,\vert x\vert]$ and $z_2\in[0,\vert y \vert]$. Then,
    let $\alpha = \frac{x}{x - y}$ and note that
    \begin{align*}
        \Delta g_p  =  (p-1) \bigl(\overbrace{z_1^{p-2} \alpha
            + z_2^{p-2}(1-\alpha)}^{\tilde{\alpha}}\bigr)
        \cdot\,\bigl(x - y \bigr).
    \end{align*}
    Because $\alpha\in[0,1]$, we know that
    $\tilde\alpha\in[z^{p-2}_1,z^{p-2}_2]$. Then, by the intermediate-value
    theorem applied to the continuous function $x\mapsto x^{p-2}$, we know that
    there is a point $z\in[z_1,z_2]$ such that $z^{p-2} = \tilde\alpha$.
    Furthermore, we have that
    $z\leq \max_{i\in\lbrace1,2\rbrace}z_i\leq \vert x \vert + \vert y \vert$.
\end{proof}

\subsection{General Statements and Proofs}

    We are now equipped to present the generalizations of
    Theorems~\ref{th:lp2toinf} and~\ref{th:lp1to2} to
    \eqref{eq:Ellpproblem}, together with their proofs.
    In both cases, the proof has the same structure. First,
    Theorem~\ref{th:rep} guarantees (B\ref{ass:uniquesol}) due to the
    properties of the $\Ell_p(\Omega)$ spaces and assumptions
    (A\ref{ass:niceE}) and (A\ref{ass:indepmeasop}).
    Second, (B\ref{ass:lipgrad}) follows because $E$ has a Lipschitz-continuous gradient
    (A\ref{ass:LipschitzdiffE}) and the regularizer does not depend on $\meas$.
    Finally, we show that (B\ref{ass:sconvex}) is fulfilled by exploiting the
    characterization of $g_p$ in Section~\ref{app:MVT}.
    Then, because (B\ref{ass:uniquesol})--(B\ref{ass:sconvex}) are
    fulfilled, we can apply Theorem~\ref{lem:stable_abstract}.
    Note that
    \begin{equation} \label{eq:derivativeLp}
       \nabla_\f \left\lbrace J(\meas,\f)\right\rbrace =
                                           \nabla_\f \left\lbrace
                                                E(\meas,\measop(\f))
                                                \right\rbrace
            + \lambda\, p\, g_p(\f)\,,
    \end{equation}
    where $g_p$ is applied pointwise.

    \begin{theorem}[Hölder Continuity of $\Ell_p$-Regularized Linear Inverse
        Problems, $p\in[2,\infty)$] \label{th:Lp2toinf}
        Consider the variational problem \eqref{eq:Ellpproblem} with
        $p\in[2,\infty)$ and assume that
        (A\ref{ass:niceE})--(A\ref{ass:indepmeasop}) are fulfilled. Then,
        for any given measurement $\meas\in\reals^M$, \eqref{eq:Ellpproblem}
        has a unique solution $\f_\meas\in\Ell_p(\Omega)$. Moreover, it holds that
        $$
            \Vert \f_{\meas_1} - \f_{\meas_2} \Vert_{\Ell_p} \leq
            \left(
                \frac{2^{p-2}\LipConstant_p}{\lambda p}
            \right)^{\frac{1}{p-1}}
            \Vert \meas_1 - \meas_2 \Vert_2^{\frac{1}{p-1}},
        $$
        for any two $\meas_1,\meas_2\in\reals^M$.
    \end{theorem}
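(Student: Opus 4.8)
The plan is to verify the three hypotheses (B\ref{ass:uniquesol})--(B\ref{ass:sconvex}) of Theorem~\ref{lem:stable_abstract} for the cost functional $J(\meas,\f) = E(\meas,\measop(\f)) + \lambda\Vert\f\Vert_{\Ell_p}^p$ and then simply read off the bound from \eqref{eq:stability_wsconvex}. Hypothesis (B\ref{ass:uniquesol}) is immediate: since $(\Ell_p(\Omega),\Ell_q(\Omega))$ is reflexive and strictly convex for $p\in(1,\infty)$, and (A\ref{ass:niceE}) together with (A\ref{ass:indepmeasop}) supply a lower-semicontinuous, strictly convex data term and $M$ linearly independent functionals, Theorem~\ref{th:rep} yields a unique solution $\f_\meas$ for every $\meas\in\reals^M$, so $\solutionmap$ is well defined.

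For (B\ref{ass:lipgrad}) I would use \eqref{eq:derivativeLp} to write $\nabla_\f\{J(\meas,\f)\} = \nabla_\f\{E(\meas,\measop(\f))\} + \lambda p\, g_p(\f)$. In the difference $\el_{1,1}-\el_{2,1}$ both gradients are evaluated at the \emph{same} point $\f_{\meas_1}$, so the regularizer term $\lambda p\, g_p(\f_{\meas_1})$ cancels and only the $E$-gradient survives. Assumption (A\ref{ass:LipschitzdiffE}) then gives $\Vert \el_{1,1}-\el_{2,1}\Vert_{\Ell_q}\leq \LipConstant_p\Vert\meas_1-\meas_2\Vert_2$, which is exactly \eqref{eq:Lipschitzgradient} with $\LipConstant=\LipConstant_p$.

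The substantive step is (B\ref{ass:sconvex}), where I would again split $\el_{1,1}-\el_{1,2}$ via \eqref{eq:derivativeLp}, this time fixing $\meas_1$ and varying the evaluation point. Pairing with $\f_{\meas_1}-\f_{\meas_2}$, the data term contributes $\langle \nabla_\f\{E(\meas_1,\measop(\f_{\meas_1}))\}-\nabla_\f\{E(\meas_1,\measop(\f_{\meas_2}))\},\,\f_{\meas_1}-\f_{\meas_2}\rangle$, which is non-negative because $\f\mapsto E(\meas_1,\measop(\f))$ is convex (the convex map $E(\meas_1,\cdot)$ composed with the linear $\measop$), so its gradient is monotone. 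The regularizer contributes $\lambda p\int_\Omega (g_p(\f_{\meas_1})-g_p(\f_{\meas_2}))(\f_{\meas_1}-\f_{\meas_2})\,\dx t$, since $g_p$ acts pointwise and the $\Ell_q$--$\Ell_p$ duality pairing is an integral. Applying the scalar bound of Proposition~\ref{prop:gp2toinf} under the integral yields $\int_\Omega (g_p(\f_{\meas_1})-g_p(\f_{\meas_2}))(\f_{\meas_1}-\f_{\meas_2})\,\dx t \geq 2^{2-p}\Vert \f_{\meas_1}-\f_{\meas_2}\Vert_{\Ell_p}^p$, so (B\ref{ass:sconvex}) holds with $\alpha = p\geq 2$ and $C(Y)=\lambda p\, 2^{2-p}$, uniformly over $Y=\reals^M$.

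Finally I would invoke Theorem~\ref{lem:stable_abstract} with $\LipConstant=\LipConstant_p$, $\alpha=p$, and $C(Y)=\lambda p\, 2^{2-p}$; substituting into \eqref{eq:stability_wsconvex} and using $1/2^{2-p}=2^{p-2}$ produces exactly the claimed bound with exponent $1/(p-1)$. I expect the only delicate point to be the transfer from the scalar inequality of Proposition~\ref{prop:gp2toinf} to the $\Ell_p(\Omega)$ setting: one must justify that the pairing of $g_p(\f)\in\Ell_q(\Omega)$ with $\f\in\Ell_p(\Omega)$ is indeed $\int_\Omega g_p(\f)\,\f\,\dx t$ and that the pointwise estimate integrates to the stated $\Ell_p$-norm bound---routine once measurability and integrability of $g_p\circ\f$ are in hand, which is guaranteed by $g_p\circ\f\in\Ell_q(\Omega)$. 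The secondary point to state carefully is that the data term's contribution is genuinely non-negative (via monotonicity of the convex gradient), so that discarding it only weakens the lower bound.
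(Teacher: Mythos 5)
Your proposal is correct and follows essentially the same route as the paper's proof: verify (B\ref{ass:uniquesol}) via Theorem~\ref{th:rep}, (B\ref{ass:lipgrad}) via (A\ref{ass:LipschitzdiffE}) and the $\meas$-independence of the regularizer, and (B\ref{ass:sconvex}) by splitting the gradient with \eqref{eq:derivativeLp}, discarding the monotone data term, and applying Proposition~\ref{prop:gp2toinf} pointwise to get $\alpha=p$ and $C=\lambda p\,2^{2-p}$, then invoking Theorem~\ref{lem:stable_abstract}. The extra care you flag about the integral form of the $\Ell_q$--$\Ell_p$ pairing and the integrability of $g_p\circ\f$ is exactly the justification the paper leaves implicit.
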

    \begin{proof}
        Because $\X=\Ell_p(\Omega)$ is a strictly convex
        space and because $\psi\colon x\mapsto \lambda x^{p}$ with
        $\lambda\in\nnreals$ is strictly convex, Theorem~\ref{th:rep}
        tells us that (A\ref{ass:niceE}) and (A\ref{ass:indepmeasop})
        imply that \eqref{eq:Ellpproblem} has a unique solution $\f_\meas$
        and, thus, that (B\ref{ass:uniquesol}) is fulfilled.
        Assumption (A\ref{ass:LipschitzdiffE}) implies that
        (B\ref{ass:lipgrad}) is fulfilled with constant $\LipConstant_p$.

        Now, we show that (B\ref{ass:sconvex}) is fulfilled using
        Proposition~\ref{prop:gp2toinf}. Because $E$ is convex in its
        second argument, we have that
        $\langle
    \nabla_f\left\lbrace E(\meas_1, \measop(\f))\right\rbrace(\f_{\meas_1})
    -\nabla_f\left\lbrace E(\meas_1, \measop(\f))\right\rbrace(\f_{\meas_2}),
          \f_{\meas_1} - \f_{\meas_2} \rangle_{\Ell_q\times\Ell_p} \geq 0 $,
        and using the decomposition \eqref{eq:derivativeLp} for both
        $\el_{1,1}$ and $\el_{1,2}$, we obtain that
        \begin{align}
            \langle
              \el_{1,1} - \el_{1,2},  \f_{\meas_1} &- \f_{\meas_2}
            \rangle_{\Ell_q\times\Ell_p}
            \geq \nonumber \\
            & \lambda p \langle
                            g_p(\f_{\meas_1}) - g_p(\f_{\meas_2}),
                            \f_{\meas_1} - \f_{\meas_2}
                        \rangle_{\Ell_q\times\Ell_p}
            \label{eq:onlygmatters}\,,
        \end{align}
        where $\Ell_q(\Omega)$ is the Lebesgue function space that identifies with the dual of
        $\Ell_p(\Omega)$ (see Example~\ref{ex:lp_dual_lq}), with $q=1/(1-1/p)$.
        Thus, applying Proposition~\ref{prop:gp2toinf} pointwise, we obtain that
        \begin{equation*}
            \left\langle
                \el_{1,1} - \el_{1,2}, \f_{\meas_1} - \f_{\meas_2}
            \right\rangle_{\Ell_q\times\Ell_p}
             \geq \lambda p \, 2^{2-p}
                \Vert \f_{\meas_1} - \f_{\meas_2} \Vert_{\Ell_p}^p
        \end{equation*}
        and (B\ref{ass:sconvex}) is satisfied with $\alpha = p$ and
        $C(Y) = C = \lambda p\, 2^{2-p}$. Thus,
        Theorem~\ref{lem:stable_abstract} applies and the proof is complete.
    \end{proof}

    \begin{theorem}[Local Lipschitz Continuity of $\Ell_p$-Regularized Linear
        Inverse Problems, $p\in(1,2)$] \label{th:Lp1to2}
        Consider the variational problem \eqref{eq:Ellpproblem} with
        $p\in(1,2)$ and assume that
        (A\ref{ass:niceE})--(A\ref{ass:contE}) are fulfilled. Then,
        for any given measurement $\meas\in\reals^M$, \eqref{eq:Ellpproblem}
        has a unique solution $\f_\meas\in\Ell_p(\Omega)$. Moreover, for any closed
        and bounded set $Y\subset\reals^M$ of measurements, there is a constant
        $r_p(Y)\in\nnreals$ such that
        $$
            \Vert \f_{\meas_1} - \f_{\meas_2} \Vert_{\Ell_p} \leq
            \frac{(2\,r_p(Y))^{2-p}\LipConstant_p}{\lambda p(p-1)}
            \Vert \meas_1 - \meas_2 \Vert_2,
        $$
        for any two $\meas_1,\meas_2\in Y$.
    \end{theorem}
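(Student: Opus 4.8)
The plan is to reuse, nearly verbatim, the three-step template of the proof of Theorem~\ref{th:Lp2toinf}: first verify (B\ref{ass:uniquesol}) and (B\ref{ass:lipgrad}), then establish (B\ref{ass:sconvex}), and finally invoke the abstract result Theorem~\ref{lem:stable_abstract}. The first two verifications are identical to the $p\geq 2$ case: Theorem~\ref{th:rep}, applied to the strictly convex space $\Ell_p(\Omega)$ and the strictly convex map $\psi(x)=\lambda x^p$, delivers the unique solution $\f_\meas$ from (A\ref{ass:niceE}) and (A\ref{ass:indepmeasop}), so (B\ref{ass:uniquesol}) holds; and (A\ref{ass:LipschitzdiffE}) gives (B\ref{ass:lipgrad}) with constant $\LipConstant_p$ because the regularizer does not depend on $\meas$. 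All the real work sits in (B\ref{ass:sconvex}), and the essential qualitative difference from $p\geq 2$ is that here I expect to obtain $\alpha=2$ rather than $\alpha=p$; since $1/(\alpha-1)=1$, Theorem~\ref{lem:stable_abstract} then yields genuine (local) Lipschitz continuity instead of Hölder continuity.

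To establish (B\ref{ass:sconvex}), I would first discard the data term exactly as in Theorem~\ref{th:Lp2toinf}: convexity of $E$ in its second argument makes the corresponding gradient monotone, so its contribution to $\langle \el_{1,1}-\el_{1,2}, \f_{\meas_1}-\f_{\meas_2}\rangle_{\Ell_q\times\Ell_p}$ is nonnegative, leaving the task of lower-bounding $\lambda p\,\langle g_p(\f_{\meas_1})-g_p(\f_{\meas_2}),\f_{\meas_1}-\f_{\meas_2}\rangle_{\Ell_q\times\Ell_p}$. Applying Lemma~\ref{lem:gp1to2} pointwise, with $x=f_{\meas_1}(t)$ and $y=f_{\meas_2}(t)$, gives $(g_p(x)-g_p(y))(x-y)=(p-1)\,z^{p-2}(x-y)^2$ with $0\le z\le \vert x\vert+\vert y\vert$; since $p-2<0$ makes $t\mapsto t^{p-2}$ decreasing, this is bounded below by $(p-1)(\vert x\vert+\vert y\vert)^{p-2}(x-y)^2$. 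Integrating over $\Omega$ reduces the claim to the integral inequality $\int_\Omega(\vert f_{\meas_1}\vert+\vert f_{\meas_2}\vert)^{p-2}\vert f_{\meas_1}-f_{\meas_2}\vert^2 \geq C\,\Vert f_{\meas_1}-f_{\meas_2}\Vert_{\Ell_p}^2$, which I would prove by a reverse-Hölder argument: with $g=\vert f_{\meas_1}-f_{\meas_2}\vert$ and $h=\vert f_{\meas_1}\vert+\vert f_{\meas_2}\vert$, the identity $g^p=(h^{p-2}g^2)^{p/2}h^{p(2-p)/2}$ and Hölder's inequality with the conjugate exponents $2/p$ and $2/(2-p)$ (both valid because $p\in(1,2)$) give $\int g^p\le(\int h^{p-2}g^2)^{p/2}(\int h^p)^{(2-p)/2}$, hence $\int h^{p-2}g^2\geq \Vert f_{\meas_1}-f_{\meas_2}\Vert_{\Ell_p}^2/(\Vert f_{\meas_1}\Vert_{\Ell_p}+\Vert f_{\meas_2}\Vert_{\Ell_p})^{2-p}$ after the triangle inequality for $\Vert h\Vert_{\Ell_p}$.

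It then remains to replace the solution-dependent denominator by a constant valid over $Y$, which is precisely the role of $r_p(Y)$ and the only place where (A\ref{ass:contE}) and the compactness of $Y$ are used. I would control the solution norms by a cost comparison: optimality gives $\lambda\Vert\f_\meas\Vert_{\Ell_p}^p\le J(\meas,\f_\meas)\le J(\meas,\tilde\f)=E(\meas,\measop(\tilde\f))+\lambda\Vert\tilde\f\Vert_{\Ell_p}^p$, and since $\meas\mapsto E(\meas,\measop(\tilde\f))$ is continuous by (A\ref{ass:contE}) it attains a finite maximum on the compact set $Y$; this yields a uniform bound $r_p(Y)=\big(\sup_{\meas\in Y}E(\meas,\measop(\tilde\f))/\lambda+\Vert\tilde\f\Vert_{\Ell_p}^p\big)^{1/p}$ with $\Vert f_{\meas_1}\Vert_{\Ell_p}+\Vert f_{\meas_2}\Vert_{\Ell_p}\le 2\,r_p(Y)$ for $\meas_1,\meas_2\in Y$ (for least squares with $\tilde\f=0$ this recovers $r_p(Y)=(\rho^2/2\lambda)^{1/p}$). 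Collecting the estimates establishes (B\ref{ass:sconvex}) with $\alpha=2$ and $C(Y)=\lambda p(p-1)(2\,r_p(Y))^{-(2-p)}$, so Theorem~\ref{lem:stable_abstract} with $\LipConstant=\LipConstant_p$ gives the stated bound, whose leading constant is $\LipConstant_p/C(Y)=(2\,r_p(Y))^{2-p}\LipConstant_p/(\lambda p(p-1))$.

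The main obstacle is the passage from the pointwise, magnitude-dependent factor $z^{p-2}$ to a single uniform-convexity constant. Unlike the clean estimate of Proposition~\ref{prop:gp2toinf}, the negative exponent $p-2$ makes the pointwise lower bound blow up near the origin and decay for large arguments, so the reverse-Hölder step is unavoidable and it is what forces $\Vert f_{\meas_1}\Vert_{\Ell_p}+\Vert f_{\meas_2}\Vert_{\Ell_p}$ into the denominator; this, in turn, is exactly what degrades the conclusion from global to \emph{local}, since a uniform constant exists only once the solution norms are controlled, which is possible on compact $Y$ but not on all of $\reals^M$. I would also dispose of the degenerate set $\{f_{\meas_1}=f_{\meas_2}\}$, where the pointwise factor is indeterminate but the integrand vanishes, by restricting the integrals accordingly; this is a routine technicality rather than a genuine difficulty.
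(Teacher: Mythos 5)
Your proposal is correct and follows essentially the same route as the paper's proof: the same three-step template, the same norm bound $\Vert\f_\meas\Vert_{\Ell_p}\leq r_p(Y)$ obtained from optimality, (A\ref{ass:contE}), and compactness of $Y$, and the same key estimate combining Lemma~\ref{lem:gp1to2} with H\"{o}lder's inequality for the exponents $2/p$ and $2/(2-p)$ (your ``reverse-H\"{o}lder'' factorization $g^p=(h^{p-2}g^2)^{p/2}h^{p(2-p)/2}$ is exactly the paper's splitting in \eqref{eq:Holder} with $\xi^\beta=h^{p(2-p)/2}$), leading to (B\ref{ass:sconvex}) with $\alpha=2$ and $C(Y)=\lambda p(p-1)(2\,r_p(Y))^{p-2}$. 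The only differences are presentational---you apply the pointwise lemma before H\"{o}lder rather than after, and you make explicit the harmless degenerate set where $\f_{\meas_1}=\f_{\meas_2}$---so the two proofs coincide in substance.
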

    \begin{proof}
        By the same arguments as in the proof of Theorem~\ref{th:Lp2toinf},
        (B\ref{ass:uniquesol}) is fulfilled, \eqref{eq:Ellpproblem} has a
        unique solution $\f_\meas$, and (B\ref{ass:lipgrad}) is fulfilled
        with constant $\LipConstant_p$.

        Now, we use (A\ref{ass:contE}) and Lemma~\ref{lem:gp1to2} to show that
        (B\ref{ass:sconvex}) is fulfilled. Consider
        $\tilde\f\in\Ell_p(\Omega)$ and recall that (A\ref{ass:contE}) specifies that
        $E(\cdot,\measop(\tilde\f))$ is continuous. Then, it holds that
        \begin{align}
            \lambda \Vert \f_\meas \Vert_{\Ell_p(\Omega)}^p & \leq
                E(\meas,\measop(\f_\meas))
                + \lambda \Vert \f_\meas \Vert_{\Ell_p}^p\nonumber\\
            & \leq  \max_{\meas \in Y} J(\meas,\tilde{\f}), \label{eq:contbound}
        \end{align}
        for all $\meas \in Y$. The last term in \eqref{eq:contbound} is finite
        because $Y$ is compact and the Weierstrass extreme-value theorem applies.
        Therefore, there is $r_p(Y)\in\nnreals$ such that, for every
        $\meas\in Y$,
        \begin{equation}\label{eq:bounded_regularizer}
            \Vert \f_\meas \Vert_{\Ell_p(\Omega)} \leq
            r_p(Y) \leq \Bigl(
                        \max_{\meas \in Y}
                                J(\meas,\tilde \f)/\lambda\Bigr)^{\frac1p}.
        \end{equation}
        We now combine \eqref{eq:bounded_regularizer} with Lemma~\ref{lem:gp1to2} to show
        that (B\ref{ass:sconvex}) is fulfilled.  For any two given
        $\meas_1,\meas_2\in Y$, let
        \begin{itemize}
        		\item $\xi = \vert \f_{\meas_1}  \vert + \vert \f_{\meas_2} \vert \in \Ell_p(\Omega)$;
		\item $q=1/(1-1/p)$, the index of the $\Ell_q(\Omega)$ space that identifies with the dual of
			$\Ell_p(\Omega)$;
		\item $\beta = p(2-p)/2$;
		\item $t=2/(2-p)$ and $s= 2/p = 1/(1-1/t)$.
	\end{itemize}
	Then,
        \begin{align}
            \!\!\!\Vert &\f_{\meas_1} - \f_{\meas_2} \Vert_{\Ell_p}^2
            =  \biggl(\int_\Omega \bigl\vert \f_{\meas_1} -
                                                \f_{\meas_2}
                                    \bigr \vert^p \dx \mu \biggr)^\frac{2}{p}
            \notag\\
            &\leq \biggl(\left\Vert \xi^\beta\right\Vert_{\Ell_t}
                        \biggl\Vert
                            \frac{\vert\f_{\meas_1}-\f_{\meas_2}\vert^p}{
                                     \xi^\beta}
                        \biggr\Vert_{\Ell_s}
                    \biggr)^\frac{2}{p} \label{eq:Holder} \\
            & = \Vert \xi \Vert^{2-p}_{\Ell_p}
                \int_\Omega \xi^{p-2}
                            \bigl(\f_{\meas_1}-\f_{\meas_2}\bigr)^2 \dx \mu
                \notag\\
            & \leq \frac{(2\,r_p(Y))^{2-p}}{p-1}
                \bigl\langle g(\f_{\meas_1}) - g(\f_{\meas_2}),
                                \f_{\meas_1} - \f_{\meas_2}
                \bigr\rangle_{\Ell_q\times\Ell_p}\label{eq:gp_application}\\
            &\leq \frac{(2r_p(Y))^{2-p}}{\lambda p(p-1)}
                \bigl\langle \el_{1,1} - \el_{1,2},
                            \f_{\meas_1} - \f_{\meas_2}
                \bigr\rangle_{\Ell_q\times\Ell_p}, \label{eq:est_growth}
        \end{align}
        for any $\el_{1,1}$, $\el_{1,2}$. Here, $\mu$ represents the Lebesgue
        measure on $\reals^d$. In \eqref{eq:Holder}, we
        use that $1/t+1/s=1$ and apply the Hölder inequality for
        $\Ell_t(\Omega)$ and $\Ell_s(\Omega)$. Then, in
        \eqref{eq:gp_application}, we use Lemma~\ref{lem:gp1to2} pointwise
        and apply \eqref{eq:bounded_regularizer}. Then, in \eqref{eq:est_growth}
        we conclude, using \eqref{eq:onlygmatters} and the fact that $E$ is convex in
        its second argument. Thus, (B\ref{ass:sconvex}) is satisfied with
        $\alpha = 2$ and $C(Y) = \lambda p (p-1)(2\,r_p(Y))^{p-2}$. Thus,
        Theorem~\ref{lem:stable_abstract} applies and the proof is complete.
    \end{proof}

\section{Conclusions} \label{sec:conclusion}
    We have shown that $\ell_p$-regularized
strategies with $p\in(1,\infty)$ in linear inverse problems have good
stability properties. The strongest guarantees are those given by Tikhonov
regularization in Hilbert spaces ($\ell_2$ regularization), for which the
reconstruction map is globally Lipschitz continuous. For $p\in(1,2)$, the
reconstruction map is still Lipschitz continuous, albeit only locally in the
space of measurements $\reals^M$. For $p\in(2,\infty)$, the reconstruction map
is globally $1/(p-1)$-H\"{o}lder continuous. Thus the stability claim is
stronger for $p$ closer to $2$. To the best of our knowledge, our bounds are
currently the strongest stability results for $\ell_p$ regularization for
$p\in(1,\infty)$. That said, we have not yet investigated the tightness
of our bounds for the different regimes, and the option of improved
bounds for $p\neq 2$ remains open.

\section{Acknowledgments}

    The authors would like to thank Dr.~Rahul~Parhi for
    	fruitful discussions on representer theorems.

    We acknowledge access to the facilities and expertise of the CIBM Center
for Biomedical Imaging, a Swiss research center of excellence founded and
supported by Lausanne University Hospital (CHUV), University of Lausanne
(UNIL), École polytechnique fédérale de Lausanne (EPFL), University of
Geneva (UNIGE), and Geneva University Hospitals (HUG).

    The research leading to these results was partly supported by the
    European Research Council (ERC) under European Union’s Horizon 2020
    (H2020), Grant Agreement - Project No 101020573 FunLearn.

\appendices
    \section{Mathematical Preliminaries}
        \label{app:premath}
        % !TEX root = ../main.tex
Here, we give a digest of the technical definitions and results that
are most relevant to our work.

\begin{definition}[Banach Space] \label{def:banach}
    A Banach space $\X$ is a vector space with norm $\Vert\cdot\Vert_\X$
    for which it is complete.
\end{definition}

A Banach space is also a complete metric space, as its norm defines a distance
$d\colon\X\times\X\to\reals_+$ via
$d(\f_1,\f_2)=\left\Vert \f_1 - \f_2 \right\Vert_\X$ for any two
$\f_1,\f_2\in\X$. In turn, this distance defines an associated
topology---open sets and convergence are defined in terms of the
distance $d$. The completeness must be understood with respect to the
distance $d$, so that Cauchy sequences in the sense of $d$ must converge in
$\X$. Banach spaces are instrumental to our work.

\begin{definition}[Hilbert Space]\label{def:hilbert}
    A Hilbert space $\H$ is a Banach space with an inner product
    $\langle\cdot,\cdot\rangle_{\H}\colon\H\times\H\to\reals$. Its norm
    is induced by the inner product as
    $\Vert\varphi\Vert_\H \coloneqq \sqrt{\langle\varphi,\varphi\rangle_\H}$ for any $\varphi\in\H$.
\end{definition}

Hilbert spaces are simpler special cases of Banach spaces, in the
sense that many of the properties from finite-dimensional Euclidean spaces
apply. The inner product $\langle\cdot,\cdot\rangle_{\H}$ allows us to quantify
the alignment between two vectors. Specifically, one can define an angle $\theta$
between two vectors $\varphi_1,\varphi_2\in\H$ by
\begin{equation}\label{eq:angle}
    \cos(\theta) = \frac{\langle \varphi_1,\varphi_2\rangle_\H}{
                         \Vert \varphi_1 \Vert_\H \Vert \varphi_2 \Vert_\H},
\end{equation}
even when these vectors in $\H$ may be infinite-dimensional.

\begin{definition}[Continuous Dual of a Banach Space]\label{def:dual}
    The dual space $\Xdual$ of a Banach space $\X$ is the vector space formed
    by all linear and continuous functionals $\nu\colon\X\to\reals$.
    The corresponding values $\nu(\f)$ are often
    expressed in terms of the bilinear form (the duality product)
    $\langle\cdot,\cdot\rangle_{\Xdual\times\X}\colon\Xdual\times\X\to\reals$ given
    by $\langle \nu, \f \rangle_{\Xdual\times\X} \coloneqq \nu(\f)$.
    The dual space $\Xdual$ is, in turn, a Banach space, when equipped with the operator norm
    \begin{equation}\label{eq:dual_norm}
        \Vert \nu \Vert_{\Xdual} \coloneqq
        \sup_{\f\in\X\setminus\lbrace 0 \rbrace}
            \frac{\vert \langle \nu, \f \rangle_{\Xdual\times\X} \vert}{
                  \Vert \f \Vert_{\X}}.
    \end{equation}
\end{definition}
\begin{example}[$\Ell_q(\Omega)$ is the Dual of $\Ell_p(\Omega)$ with
    $q=p/(p-1)$, $p,q\in(1,\infty)$] \label{ex:lp_dual_lq}
    The concept of a dual vector space can be made more concrete by
    identifying it with another vector space. Most relevant to our work is
    $\X=\Ell_p(\Omega)$. Then, the Riesz theorem asserts that every $\nu\in\Xdual$
    can be expressed, $\forall \f \in \X$, as
    $$
        \langle \nu, \f \rangle_{\Xdual\times\X} =
        \int_\Omega \tilde\nu \,\f\, \dx \mu
    $$
    for some $\tilde\nu\in\Ell_q(\Omega)$, and
    $\Vert\nu\Vert_{\Xdual} = \Vert \tilde\nu\Vert_{\Ell_q(\Omega)}$.
    Because of this one-to-one isomorphism, we abuse the notation and
    write that $\Xdual=\Ell_q(\Omega)$ and $\tilde\nu=\nu$. This is common practice
    but it is to handle with care, as these are different mathematical
    objects.
\end{example}

\begin{proposition}[Duality Bound] \label{def:duality_bound}
    For any $\nu\in\Xdual$ and $\f\in\X$, one has that
    $$
        \vert \langle \nu,\f\rangle_{\Xdual\times\X}\vert \leq
        \Vert \nu \Vert_{\Xdual} \Vert \f \Vert_\X,
    $$
    which is called the duality bound.
    This bound is sharp for any dual pair of Banach spaces $(\X,\Xdual)$.
\end{proposition}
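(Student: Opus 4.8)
The plan is to read the inequality off directly from the definition of the dual norm and then to treat sharpness as a separate, slightly more delicate point. For the inequality, I would start from the operator-norm expression \eqref{eq:dual_norm} in Definition~\ref{def:dual}, namely $\Vert\nu\Vert_{\Xdual} = \sup_{\f\in\X\setminus\lbrace 0 \rbrace} \vert\langle\nu,\f\rangle_{\Xdual\times\X}\vert/\Vert\f\Vert_\X$. Since a supremum dominates each of its members, every nonzero $\f\in\X$ satisfies $\vert\langle\nu,\f\rangle_{\Xdual\times\X}\vert/\Vert\f\Vert_\X \leq \Vert\nu\Vert_{\Xdual}$, which rearranges to the claimed bound; the case $\f=0$ is immediate because both sides vanish. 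This is the entire quantitative content, and it requires nothing beyond the definition.

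For the sharpness claim I would distinguish two readings and establish both. First, \emph{optimality of the constant}: because $\Vert\nu\Vert_{\Xdual}$ is by definition the least upper bound of the ratios, no smaller constant $c<\Vert\nu\Vert_{\Xdual}$ can satisfy $\vert\langle\nu,\f\rangle_{\Xdual\times\X}\vert \leq c\,\Vert\f\Vert_\X$ for all $\f$; concretely, for every $\epsilon>0$ there is a unit-norm $\f_\epsilon$ with $\vert\langle\nu,\f_\epsilon\rangle_{\Xdual\times\X}\vert > \Vert\nu\Vert_{\Xdual}-\epsilon$, so the bound is asymptotically tight from the $\nu$ side. Second, \emph{attainment}: fixing any $\f\in\X$, I would invoke the Hahn--Banach theorem to produce a norming functional $\nu\in\Xdual$ with $\Vert\nu\Vert_{\Xdual}=1$ and $\langle\nu,\f\rangle_{\Xdual\times\X}=\Vert\f\Vert_\X$, so that equality in the duality bound is actually achieved. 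Since this construction works in every Banach space, the bound is sharp for any dual pair $(\X,\Xdual)$.

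The only step that goes beyond unwinding definitions is the attainment half of sharpness, and I expect it to be the main (if modest) obstacle. The subtlety is that, for a fixed $\nu$ in a general Banach space, the supremum in \eqref{eq:dual_norm} need not be attained by any particular $\f$, so one cannot claim attainment ``from the $\nu$ side'' without extra hypotheses such as reflexivity. The clean route is instead to exploit the symmetry of the duality pairing and secure attainment ``from the $\X$ side'' via Hahn--Banach, which holds with no assumptions on $(\X,\Xdual)$ and therefore matches the asserted generality. Should one prefer to stay entirely self-contained within the excerpt, the weaker optimal-constant reading of sharpness already follows from the definition of the supremum alone, and it is all that is used downstream, e.g.\ in step \eqref{eq:ldualitybound} of the proof of Theorem~\ref{lem:stable_abstract}.
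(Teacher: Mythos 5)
Your proposal is correct and follows essentially the same route as the paper: the inequality is read directly off the operator-norm definition \eqref{eq:dual_norm}, and sharpness is obtained as a corollary of the Hahn--Banach theorem (the paper states exactly this, only in compressed form). Your additional care in distinguishing attainment from the $\X$ side via a norming functional, rather than from the $\nu$ side where the supremum need not be attained, is a sound elaboration of the same argument rather than a different approach.
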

\begin{proof}
    The duality bound follows immediately from \eqref{eq:dual_norm}. Sharpness is a corollary of the Hahn-Banach theorem.
\end{proof}

The duality bound generalizes the Cauchy-Schwartz inequality in Hilbert spaces
$$
    \vert \langle \varphi_1,\varphi_2\rangle_\H \vert\leq
        \Vert \varphi_1 \Vert_\H \Vert \varphi_2 \Vert_\H,
    \forall \varphi_1,\varphi_2\in\H.
$$
The Cauchy-Schwartz inequality is saturated by parallel vectors,
corresponding to $\lvert\cos(\theta)\rvert=1$ in \eqref{eq:angle}.
Analogously, it is worthwhile to identify the set of dual vectors that saturate the duality
bound for a given vector $\nu\in\X$.

\begin{definition}[Duality Map] \label{def:duality_map}
    The duality map is the set-valued map
    $\dualitymap\colon\X\rightrightarrows\Xdual$ given by
    \begin{equation*}
        \dualitymap(\f) =\left\lbrace\nu\in\Xdual:
            \begin{array}{l}
            \Vert \nu \Vert_{\Xdual} = \Vert \f \Vert_\X\mbox{ and }\\
            \langle \nu,\f \rangle_{\Xdual\times\X} = \Vert \nu \Vert_{\Xdual}
                                                     \Vert \f\Vert_\X
            \end{array}
            \right\rbrace.
    \end{equation*}
\end{definition}

As we have seen in Theorem~\ref{th:rep}, the duality map characterizes the set
of solutions of the variational problems in Banach spaces that take the form \eqref{eq:solutionset}.
For strictly convex Banach spaces $\X$, the duality map is single-valued.

\begin{proposition}[Duality Map in $\Ell_q(\Omega)$,
$q\in(1,\infty)$ {\cite[Corollary~4.10]{Cioranescu1990}}]
\label{prop:duality_map_ellq}
    The duality map for $\Xdual=\Ell_q(\Omega)$ with $q\in(1,\infty)$ is
    single-valued and given by
    \begin{equation}\label{eq:lpdualitymap}
        \dualitymapdual(\nu) =
            \frac{|\nu|^{q-1}}{
                  \Vert \nu \Vert_{\Ell_q}^{q-2}}
            \sign(\nu),
    \end{equation}
    where the absolute value $\vert\cdot\vert$ and $\sign(\cdot)$ operators
    are applied element-wise.
\end{proposition}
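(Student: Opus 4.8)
The plan is to verify directly that the map in \eqref{eq:lpdualitymap} returns, for each $\nu\in\Ell_q(\Omega)$, the unique element of the duality set of Definition~\ref{def:duality_map} applied to the space $\Xdual=\Ell_q(\Omega)$ (whose own dual is identified with $\X=\Ell_p(\Omega)$, $1/p+1/q=1$, via Example~\ref{ex:lp_dual_lq}, so that the relevant duality product is $\langle g,\nu\rangle_{\Ell_p\times\Ell_q}=\int_\Omega g\,\nu\,\dx\mu$). Writing $g=\dualitymapdual(\nu)=|\nu|^{q-1}\sign(\nu)/\Vert\nu\Vert_{\Ell_q}^{q-2}$, I must establish two facts: that $g\in\Ell_p(\Omega)$ with $\Vert g\Vert_{\Ell_p}=\Vert\nu\Vert_{\Ell_q}$, and that $\langle g,\nu\rangle_{\Ell_p\times\Ell_q}=\Vert g\Vert_{\Ell_p}\Vert\nu\Vert_{\Ell_q}$, i.e. that $g$ saturates the duality bound of Proposition~\ref{def:duality_bound}. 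The case $\nu=0$ is trivial, so I assume $\nu\neq0$.

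Both facts reduce to exponent bookkeeping using the conjugacy relation $p(q-1)=q$. For the norm, this relation gives pointwise $|g|^p=|\nu|^{q}/\Vert\nu\Vert_{\Ell_q}^{p(q-2)}$; integrating and using $\int_\Omega|\nu|^q\,\dx\mu=\Vert\nu\Vert_{\Ell_q}^q$ leaves $\Vert\nu\Vert_{\Ell_q}^{\,q-p(q-2)}$, and the exponent simplifies to $p$ (again via $1/p+1/q=1$), so that $\Vert g\Vert_{\Ell_p}=\Vert\nu\Vert_{\Ell_q}$, in particular $g\in\Ell_p(\Omega)$. For the pairing I would use $\sign(\nu)\,\nu=|\nu|$, so $g\,\nu=|\nu|^q/\Vert\nu\Vert_{\Ell_q}^{q-2}$ pointwise; integrating yields $\langle g,\nu\rangle_{\Ell_p\times\Ell_q}=\Vert\nu\Vert_{\Ell_q}^{2}$, which matches $\Vert g\Vert_{\Ell_p}\Vert\nu\Vert_{\Ell_q}=\Vert\nu\Vert_{\Ell_q}^{2}$ by the first fact. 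Hence $g$ lies in the duality set.

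It then remains to argue uniqueness, so that the formula defines a genuinely single-valued map. Here I would invoke the fact recorded after Definition~\ref{def:duality_map} that the duality map is single-valued for strictly convex spaces, together with the strict convexity of $\Ell_q(\Omega)$ for $q\in(1,\infty)$ (Clarkson's inequalities). More self-containedly, uniqueness also follows from the equality case of Hölder's inequality: the two saturation conditions force $|g|^p$ and $|\nu|^q$ to be proportional almost everywhere and $g\nu\geq0$ almost everywhere, and the norm constraint then pins down the proportionality constant and sign, reproducing \eqref{eq:lpdualitymap}. The verification itself poses no real obstacle; the only step requiring care is this single-valuedness claim, since Definition~\ref{def:duality_map} only asks for membership of a set whereas the statement asserts a map. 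I would therefore state explicitly which property of $\Ell_q(\Omega)$ is used rather than leaving it implicit.
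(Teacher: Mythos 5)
Your proof is correct, but it necessarily takes a different route from the paper: the paper gives no proof at all for Proposition~\ref{prop:duality_map_ellq}, deferring entirely to the citation \cite[Corollary~4.10]{Cioranescu1990}. Your argument is a complete, self-contained verification. The exponent bookkeeping is right: $p(q-1)=q$ gives $|g|^p=|\nu|^q/\Vert\nu\Vert_{\Ell_q}^{p(q-2)}$, and $q-p(q-2)=p$ gives $\Vert g\Vert_{\Ell_p}=\Vert\nu\Vert_{\Ell_q}$; the pairing computation $\langle g,\nu\rangle_{\Ell_p\times\Ell_q}=\Vert\nu\Vert_{\Ell_q}^2$ then shows $g$ saturates the duality bound of Proposition~\ref{def:duality_bound}, so $g$ lies in the duality set of Definition~\ref{def:duality_map}. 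Your uniqueness argument via the equality case of H\"older's inequality (proportionality of $|g|^p$ and $|\nu|^q$ almost everywhere, the sign condition $g\nu\geq0$ almost everywhere, and the norm constraint fixing the proportionality constant) correctly pins down $g$ as exactly \eqref{eq:lpdualitymap}. What the citation buys the paper is brevity; what your verification buys is transparency, since it exposes that the formula and its uniqueness rest on elementary measure-theoretic facts rather than on abstract Banach-space machinery. One caution on your first uniqueness route: single-valuedness of $\dualitymapdual\colon\Ell_q\rightrightarrows\Ell_p$ is governed by smoothness of $\Ell_q$, equivalently (by reflexivity) strict convexity of the \emph{target} $\Ell_p$, not of $\Ell_q$ itself; the paper's informal remark after Definition~\ref{def:duality_map} glosses over this distinction too. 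Since both spaces are strictly convex and smooth for $q\in(1,\infty)$, this is harmless here, but the H\"older-equality argument you give as the alternative is the one to keep, as it is airtight and avoids the issue entirely.
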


The finite-dimensional discrete equivalent of \eqref{eq:lpdualitymap} was
implemented to obtain the visualizations in
Figures~\ref{fig:simple_duality_map} and \ref{fig:manifold_vs_a}. There, we
see that the effect of the duality map for $\Xdual=\ell_q$ resembles the behavior that
we expect from $\ell_p$-norm regularization in variational problems for $1/p+1/q=1$.

\begin{definition}[Riesz Map and Hilbert Spaces]\label{def:riesz_map}
    For a Hilbert space $\X=\H$, the duality map $\dualitymap$ is
    single-valued. Furthermore, its inverse
    $\Rieszmap=\dualitymap^{-1}\colon\H'\to\H$ is called the Riesz map. It maps a
    dual vector $\nu\in\H'$ to its Riesz representer $\tilde\nu\in\H$, such that
    $$
        \langle \nu,\f \rangle_{\H'\times\H} = \langle \tilde\nu,\f\rangle_\H
    $$
    for all $\f\in\H$.
\end{definition}

This is the equivalent of the phenomenon
described in Example~\ref{ex:lp_dual_lq}, but in Hilbert spaces.

\begin{definition}[Reflexive Banach Space] \label{def:reflexive}
    A Banach space $\X$ is reflexive if it can be identified with its bidual $\Xdual'$.
\end{definition}

The bidual space $\Xdual'$ is simply the continuous dual of $\Xdual$:
that is, it is formed of all the continuous linear functionals
$\tilde{f}\colon\Xdual\rightarrow\reals$. In particular, 
we can construct such a functional from any $\f\in\X$
through the identity
$$
    \langle \tilde\f, \nu \rangle_{\Xdual'\times\Xdual} =
    \langle \nu,\f \rangle_{\Xdual\times\X}\,,
$$
which specifies the canonical embedding of $\X$ into $\Xdual'$.
Although not formally accurate in the same sense as
Example~\ref{ex:lp_dual_lq}, this is usually denoted as $\X\subseteq\Xdual'$.
In reflexive Banach spaces, it is in the same sense that $\X=\Xdual'$: all linear and
continuous functionals on $\Xdual$ can be represented by an element of
$\X$
through the canonical embedding, which is then one-to-one.

For a general Banach space $\X$, a more general version of Theorem~\ref{th:rep}
(\emph{cf.}~\cite{Unser2021}) restricts the
choice of the measurement functionals (of $\f\in\Xdual$) by assuming that
$\lbrace\nu_m\rbrace_{m=1}^M \subset\X\subseteq\Xdual'$. However, this is not
a limitation in reflexive Banach spaces because $\X=\Xdual'$, which yields the
formulation of Theorem~\ref{th:rep} in this paper.
In that more general version of Theorem~\ref{th:rep},
$E$ and $\psi$ are also not required to be strictly convex.
Then, the solution is no longer unique. Instead, the
solution set is guaranteed to be nonempty, convex, and weak*-compact. We
include here the definition of weak*-compact for completeness.

\begin{definition}[Weak* Compactness]\label{def:weakstar-compact}
    A weak*-compact set in the dual of a separable Banach space $\X$
    is a set $\mathcal{C}\subset\Xdual$ such that, for any sequence
    $\lbrace \nu_n \rbrace_{n=1}^\infty\subset\mathcal{C}$, there is a
    weak*-convergent subsequence $\lbrace \nu_{n_r}\rbrace_{r=1}^\infty$,
    meaning that
    $$
        \langle \nu_{n_r}, \f \rangle_{\Xdual\times\X} \rightarrow
        \langle \nu,\f \rangle_{\Xdual\times\X}\mbox{ as }r\rightarrow \infty
    $$
    for some $\nu\in\mathcal{C}$ and any $\f\in\X$.
\end{definition}

Weak* compactness is useful in variational theory because it guarantees the
existence of minimizers of certain cost functionals using the generalized
Weierstrass extreme-value theorem. This makes the result in~\cite{Unser2021}
more attractive, as the further selection among the solutions
of an initial variational problem is made possible through variational techniques.

\begin{remark}[Gradient of a
Functional]\label{rem:gradient}
    The gradient of a differentiable functional (in the Fr\'echet sense)
    $J\colon\Xdual\to\reals$ is the map $\nabla J\colon\Xdual\to\Xdual'$
    such that
    $$
        \lim_{\Vert \tilde \f \Vert_{\Xdual}\rightarrow 0} \left\lbrace
        \frac{J(\f+\tilde\f) - J(\f) -
            \langle \nabla J(f), \tilde\f\rangle_{\Xdual'\times\Xdual}
             }{\Vert\tilde\f\Vert_{\Xdual}}\right\rbrace = 0.
    $$
\end{remark}

This contextualizes the proof of Theorem~\ref{lem:stable_abstract}, where we
explicitly treat gradient and subdifferential values as elements of
$\Xdual'$. In the main body of the paper, we can avoid this explicit
treatment because of the reflexivity of the spaces being considered.

%\added{
%Under the setup of \eqref{eq:solutionset}, it is not always obvious how to choose the
%pair of Banach spaces $(\X,\Xdual)$ so that they yield the desired regularization but
%still fulfill the condition $\nu_m\in\Xdual$. }

%\begin{proposition}[Regularity Conditions for Radon Measurements] \label{prop:Radon}
%	\added{Let $\lbrace\boldsymbol{\theta}_m\rbrace_{m=1}^M\subset S$ and
%	$\lbrace t_m\rbrace_{m=1}^M\subset [-1,1]$, where $S$ is the unit circle
%    in $\reals^2$, and consider the Radon measurement vectors
%    $\nu_m = \delta(t_m-\langle \cdot, \boldsymbol{\theta}_m\rangle)$
%	for $m\in\lbrace1,2,\dots,M\rbrace$. Then, if we choose the Sobolev space
%	$\X=H_0^{s}(\Omega)$ with $s>1/2$ on the closed unit ball $\Omega=\lbrace
%    x\in\reals^2 : \Vert x\Vert_2\leq1\rbrace$,
%	we have that $\nu_m\in\Xdual$ for any $m$.}
%\end{proposition}
%\begin{proof}
%	\added{A sufficient condition for $\nu_m\in\Xdual$ is that the space of
%    Radon transforms $\mathcal{R}(\X)$ embeds continuously into
%    $C^0(S\times [-1,1])$.
%    This embedding ensures that sampling at $(\boldsymbol{\theta}_m,t_m)$
%    is properly defined. Theorem~5.1 of \cite{Natterer1986} states that
%    $\mathcal{R}(H^s_0(\Omega))$ embeds
%    continuously into $H^{s+1/2}(S\times[-1,1])$. Because of the
%    Sobolev embedding theorem, we know that
%    $H^{s+1/2}(S\times[-1,1])\subset C^0(S\times [-1,1])$ for any $s>1/2$.}
%\end{proof}
%
%\added{Proposition~\ref{prop:Radon} generalizes well to any compact
%$\Omega\subset\reals^2$.}

    \bibliographystyle{IEEEtran}
    \bibliography{stability}
\end{document}